\newtheorem{claim}{Claim} 
\begin{document}

\title[Paired 2-disjoint path covers of BCube under the partitioned edge fault model]{Paired 2-disjoint path covers of BCube under the partitioned edge 
fault model}

\author{%
  Wen-Jing Zhang{\textsuperscript{1}}, 
  Qing-Qiong Cai{\textsuperscript{1,2}}\thanks{},
  Jou-Ming Chang{\textsuperscript{3}}
  }

\affiliation{%
  1.College of Computer Science, Nankai University, Tianjin, 300350, China\\
  2.Academy for Advanced Interdisciplinary Studies, Nankai University, Tianjin, 300350, China\\
  3. Institute of Information and Decision Sciences, National Taipei University of Business, Taipei 10051, Taiwan.
}


\email{caiqingqiong@nankai.edu.cn}

\shortauthors{Chang, Zhang, Cai}







\shortauthors{Author}

\keywords{BCube, 2-disjoint path covers, Partitioned edge fault model, Fault-tolerant embedding}

\begin{abstract}
BCube, as a popular server-centric data center network (DCN), offers significant advantages in low latency, load balancing, and high bandwidth. The many-to-many paired $m$-disjoint path cover ($m$-DPC), a generalization of Hamiltonian paths, enhances message transmission efficiency by constructing disjoint paths that connect $m$ source-destination pairs while covering all the nodes. However, with the continuous expansion of DCNs, link and service failures have grown increasingly common, 
necessitating robust fault-tolerant algorithms to guarantee reliable communication.
This paper mainly investigates the fault-tolerant paired 2-DPC embedding in BCube. We prove that under the partitioned edge fault (PEF) model, BCube retains a paired 2-DPC even when exponentially many edge failures occur. 

\end{abstract}

\maketitle

\section{Introduction}
With the rapid advancement of cloud computing, data center networks (DCNs) have emerged as the critical infrastructure underpinning modern information technology. To accommodate the growing demands for large-scale computing and storage, data centers continue to scale up, integrating thousands of servers and switches. At the same time, the topological structures of DCNs are becoming increasingly complex, bringing to the fore significant challenges in scalability, reliability, and energy efficiency. DCN architectures can be broadly categorized into two types: \textit{server-centric} and \textit{switch-centric}. For analytical convenience, we model these architectures as logical graphs, where servers are represented as nodes, switches are viewed as transparent that connect nodes corresponding to servers to form clusters, and communication links are depicted as edges. Representative server-centric DCNs include BCube \cite{2009BCube}, HSDC \cite{2019HSDC}, and DCell \cite{2008DCell}, while notable switch-centric architectures include DPCell\cite{2021DPCell} and Fat-tree\cite{2008A}.

BCube, proposed by Guo et al. \cite{2009BCube}, is a server-centric network architecture designed for efficient and robust DCNs. It features a recursively constructed topology that relies on low-cost mini-switches to interconnect servers, contributing to reduced manufacturing costs. BCube has been adopted in practical data center solutions such as Sun's Modular Datacenter (MD) \cite{SunMD} and HP's Performance-Optimized Datacenter (POD) \cite{HPE_POD}. In terms of transmission performance, BCube exhibits a low network diameter, which helps decrease latency in both one-to-many and many-to-many communication patterns. Compared to alternative structures like DCell \cite{2008DCell} and Fat-tree\cite{1994Fat}, BCube offers additional benefits such as improved load balancing and higher bandwidth. These characteristics make BCube an ideal model for constructing large-scale data center networks.

Many parallel and distributed computing systems adopt path or cycle-based topologies as their underlying infrastructures, which effectively support data transmission and algorithm design \cite{2021High}. Among these, the \emph{Hamiltonian path}—a path that traverses every node exactly once—holds particular significance in networking.  Hamiltonian paths ensure that each node is visited precisely once, making them especially suitable for multicast routing algorithms that help prevent deadlocks and congestion \cite{2015The}. A graph is referred to as \emph{Hamiltonian-connected} if there exists a Hamiltonian path between every pair of distinct nodes.

The concept of \emph{many-to-many m-Disjoint Path Cover ($m$-DPC)} generalizes the Hamiltonian path. It consists of 
m disjoint paths that connect different source-destination pairs while collectively covering all nodes in the network. Note that when 
$m = 1$, the $m$-DPC reduces to a Hamiltonian path. In distributed DCNs, $m$-DPC can significantly improve message transmission efficiency for tasks such as broadcasting and information gathering. Moreover, especially when $m\geq2$, it finds broad applications in areas such as software testing, topology control in sensor networks, and code optimization \cite{2013Paired}.
There has been extensive research on Hamiltonian embedding and $m$-DPC embedding in various network topologies \cite{2016An,2018Conditional,2015Hamiltonian,2022Unpaired,2006Many,2009Many}.

During the operation of DCNs, failures may occur in switches, servers, or communication links. Link failures occur most frequently \cite{2011Understanding}. Compared to servers, switches are more prone to failures \cite{2015Jupiter}. Switch failures can be converted into link failures, and once a switch fails, it can cause a large number of link failures. Link failures or switch failures may result in substantial communication overhead and severely affect the normal communication of the entire network\cite{2025Link}. Therefore, it is crucial to evaluate the edge-fault-tolerance and design corresponding fault-tolerant algorithms. This article focuses specifically on the fault-tolerance of network architectures under edge failures.

For a graph $G$ and a given set of faulty edges $F$, $G$ is said to be \emph{$F$-fault-tolerant Hamiltonian-connected} if there exists a Hamiltonian path between every pair of distinct nodes in $G - F$. Similarly, $G$ is \emph{$F$-fault paired 2-disjoint path coverable} if for any set of source nodes ${s_1, s_2}$ and destination nodes ${t_1, t_2}$, there exists a paired 2-disjoint path cover (2-DPC) in $G - F$.
There is a series of work on fault-tolerant Hamiltonian connectivity and 2-DPC. Wang et al. \cite{2020Fault} showed that BCube($n$,$k$) is $((n-1)(k+1)-2)$-fault-tolerant Hamiltonian and $((n-1)(k+1)-3)$-fault-tolerant Hamiltonian-connected. Wang et al. \cite{2015Hamiltonian} established that $DCell_k$ is $(n+k-3)$-fault Hamiltonian and $(n+k-4)$-fault Hamiltonian-connected. Lu \cite{2019Paired} proved that the balanced hypercube $BH_n$ admits a paired 2-DPC even with up to $2n - 3$ faulty edges.  Under the assumption that $k_i \geq 3$ for all $i = 1, 2, \dots, n$ and at most one $k_i$ is even, it was shown that the torus network $T(k_1, k_2, \dots, k_n)$ still has a paired 2-DPC in the presence of up to $2n - 3$ faulty edges \cite{LI20171}.

Notably, these fault tolerance bounds grow linearly with the size or dimension of the graph, indicating limited fault tolerance capability. These studies are based on the \emph{random fault model}, where faulty edges occur randomly without any restrictions.
This model allows the possibility that all edges incident to a node fail simultaneously—a scenario that is statistically rare in practice \cite{0Link}. In fact, probabilistic analysis confirms that the likelihood of all edges around a node failing is extremely low.

Under a \emph{conditional fault model} that requires at least two fault-free edges to be incident to each node, we can derive from the results of Cheng et al. \cite{Cheng2013Conditional} the following conclusions: for $n \in \{7,9\}$ and $k \geq 0$, BCube($n$,$k$) is Hamiltonian even with up to $2(k+1)(n-1)-7$ faulty edges; for $n \geq 6$ and $n \notin \{7,9\}$, this upper bound on the number of faulty edges is improved to $2(k+1)(n-1)-6$. These results demonstrate that imposing realistic constraints on the fault model can improve achievable fault-tolerance. However, even under such conditions, exponential-level fault-tolerance remains out of reach.




By observing various faulty characteristics of different dimensions of $k$-ary $n$-cube $Q_{n, k}$ in practical applications, Zhuang et al. \cite{2023An} 
introduced the \emph{partitioned edge fault model} (PEF model), which groups edges by dimension. 
Under this model, they established that $Q_{n,k}$ can preserve Hamiltonian connectivity 
when the total number of faulty edges is at most $(k^n - k^2)/(k - 1) - 2n + 5$, subject to the per-dimension constraints $e_i \leq k^i - 2$ for $2 \leq i \leq k$, $e_1 \leq 1$, and $e_0 = 0$. Here $\{e_0,e_1,...,e_k\}$ is the set of the number of faulty edges in each dimension with $e_0\leq e_1\leq ...\leq e_k$.
In subsequent work \cite{2023Embedding, 2024Paired}, they studied the fault-tolerant paired 2-DPC and (hyper-) Hamiltonian laceability of $Q_{n,k}$ under PEF model.
Using this partitioning method, the balanced hypercube \(BH_n\) is shown to be \((2^{n - 1})\)-partition-edge fault-tolerant Hamiltonian for $n \geq 2$ \cite{2023Novel}, and \((2^{n - 1} - 1)\)-partition-edge fault-tolerant paired 2-DPC \cite{2025Paired}. 
Recently, Lin et al. \cite{0Link} 
adopted the PEF model to analyze BCube's fault-tolerant Hamiltonian connectivity. They demonstrated that BCube remains Hamiltonian-connected when $e_i \leq n^i - 2$ for $1 \leq i \leq k$ and $4 \leq n \leq 9$, or $e_i \leq \lceil (n^i - 1)/2 \rceil (n - 1) - 1$ for $1 \leq i \leq k$ and $n \geq 10$, with the additional constraint $e_0 \leq n - 4$.
In summary, these studies indicate that the application of the PEF model substantially enhances the edge-fault-tolerance of DCNs.

Recent studies on network reliability have advanced beyond traditional edge connectivity by proposing various enhanced fault-tolerant metrics. Among these, matroidal connectivity and its conditional variant, grounded in matroid theory, have been introduced to evaluate the structural robustness of networks under failure conditions. Related research has covered typical network topologies, including star graphs\cite{ZHUANG2023114173}, regular networks\cite{zhang2024characterization}, Cayley graphs\cite{wang2025conditional}, and varietal hypercubes \cite{yang2025conditional}. On the other hand, edge-partition-based fault-tolerant analysis methods partition edges by dimension or structural units, offering new evaluation approaches for hypercubes, folded hypercubes, and balanced hypercubes in high-failure-rate scenarios \cite{chen2024novel,liu2024enhancing}. Investigations into alternating group graphs \cite{zhang2022high} and arrangement graphs \cite{li2025enabling} further reveal the inherent high fault tolerance of specific symmetric structures. In the context of data center networks, novel reliability indicators \cite{zhuang2024novel} and matroidal connectivity analyses for link/switch failures \cite{2025Link} provide crucial  support for network architecture design. Collectively, these studies establish a theoretical foundation for constructing a partitioned edge fault model in BCube networks and for analyzing disjoint path cover fault tolerance in this work.


Lin et al. \cite{0Link} raised some open questions, one of which is to consider the fault-tolerant $m$-DPC of BCube under the PEF model. In our article, we address this problem by establishing the edge-fault-tolerance of paired 2-DPC in BCube under the PEF model.
Our contributions are as follows:

1. Under the PEF model, we prove that BCube always contains a paired 2-DPC connecting each two distinct source-destination pairs, provided the number of faulty edges does not exceed the following bounds:
\begin{itemize}
    \item $\sum_{i=1}^k(\max\{0, n^i-5\})+n-5$ for $n=7$
    \item $\sum_{i=1}^k(\max\{0, n^i-5\})+n-4$ for $n\in \{4,5,6,8,9\}$
    \item $\sum_{i=1}^k(\left\lceil \frac{n^i - 1}{2} \right\rceil \cdot(n-2) - \frac{3}{2}n)+n-4$ for $n\geq10$.
\end{itemize}

2. To the best of our knowledge, our result is the first exponential bound for edge-fault-tolerance of 2-DPC in BCube.

This article is organized as follows. Section 2 introduces the basic symbols and definitions. Section 3 proves the existence of 2-DPC in BCube under the PEF model conditions. 
Finally, Section 4 summarizes the entire paper.

\section{Preliminaries} 
\label{preliminary}

\subsection{Terminology and Notation} 

\begin{table}
    \centering
    \caption{Notations}
    \scriptsize  
    \begin{tabular}{cl} 
        \toprule
        Symbol & Meaning \\
        \midrule
        $\langle n \rangle$ & An integer set $\{0,1,2,\ldots,n\}$  \\
        $G=(V,E)$ & A graph with node set $V$ and edge set $E$ \\
        $V(G)$ & A node set of graph $G$ \\
        $E(G)$ & An edge set of graph $G$ \\
        $G-F$ & \makecell[l]{A graph with $V(G-F)=V(G)$ and \\$E(G-F)=E(G)\setminus F$} \\ 
        $BC_{n,k}$ & The logic graph of BCube$(n,k)$ \\
        $BC[m]$ & A subgraph of $BC_{n,k}$ isomorphic to $BC_{n,k-1}$ \\
        $BC[\Omega]$ & \makecell[l]{The subgraph induced by the vertices of \\$BC[m]$ for all $m\in\Omega$} \\
        $F_i$ & The set of faulty edges in $i$-th dimension ($F_i \subseteq E_i$) \\
        $r_i$ & The $i$-th smallest number in $\{|F_0|,...,|F_k|\}$ \\
         $E(l_1,l_2)$ & The set of edges between  \( BC[l_1] \) and \( BC[l_2] \)  \\
        \bottomrule
    \end{tabular}
    \label{tab:notations}
\end{table}

Some basic notations used in this paper are listed in Table \ref{tab:notations}. Next we present the formal definition of 2-DPC.

\begin{definition} (\cite{2024Paired}): Let \( S = \{s_1, s_2\} \) and \( T = \{t_1, t_2\} \) be the sets of two fault-free source and destination nodes respectively, with \( S \cap T = \emptyset \). A paired 2-disjoint path cover (paired 2-DPC) is a set of two disjoint fault-free paths that connect each pair of \( s_i \) and \( t_i \) for \( i \in \{1, 2\} \), and cover all the fault-free nodes in \( G \).
\end{definition}

 A graph \( G \) is called \emph{paired 2-disjoint path coverable}, if for each source set \( S=\{s_1,s_2\} \) and destination set \( T=\{t_1,t_2\} \) with $S \cap T = \emptyset$, there exists a paired 2-DPC in \( G \). A graph \( G \) is called \emph{t-edge fault-tolerant} paired 2-disjoint path coverable, if for every fault edge set \( F \) with $|F|\leq t$, the graph \( G - F \) is still paired 2-disjoint path coverable. Let $P := s_1u_0u_1...u_mt_1$ is a path from $s_1$ to $t_1$ passing through node $u_1...u_m$.

\subsection{BCube}

BCube is a server-centered data center network. We view switches transparent, because they only serve as crossbars for transferring packets between servers. The logic graph of BCube(n,k) is obtained by taking servers as nodes and joining two serves by an edge if they are connected to the same switch. The resulting logical graph of BCube(n,k) is denoted as $BC_{n,k}$. The definition of \( BC_{n,k} \) is as follows:

\begin{definition} (\cite{2023Fault}): For \( n \geq 2 \) and \( k \geq 0 \), \( BC_{n,k} \) is a simple undirected graph, where the set of nodes is \(  \{a_k a_{k-1} ... a_0 \colon\, a_i \in \left<n-1\right>, i \in \left<k\right>\} \), and two nodes $u=a_k a_{k-1} ... a_0$ and $v=b_k b_{k-1} ... b_0$ are connected by an edge if there exists exactly one integer $i$ such that $a_i \neq b_i$ and $a_j=b_j$ for all $j\in{\langle k \rangle\setminus i}$.
\end{definition}

According to the above definition, it can be concluded that the number of nodes is \( |V(BC_{n,k})| = n^{k+1} \), and the number of edges is \( |E(BC_{n,k})| = \frac{n^{k+1}  (n-1)  (k+1)}{2} \). 
The set of edges in the \( i \)-th dimension is defined as \( E_i(BC_{n,k}) = \{(a_k a_{k-1} ... a_i ... a_0, a_k a_{k-1} ... b_i ... a_0)\in E(BC_{n,k})\colon\,a_i\neq b_i \} \). We abbreviate $E_i(BC_{n,k})$ as $E_i$ if the network $BC_{n,k}$ is clear in the context. Let $F$ be a fault edge set and \( F_i = F \cap E_i \) be the set of faulty edges in the \( i \)-th dimension.
Set $\{r_0,r_1,...r_{k}\}=\{|F_i|\colon\,i \in \left<k\right>\}$ such that $r_{k} \geq r_{k-1} \geq\cdot \geq r_0$. 
The fault edge set $F$ is called a \emph{partitioned edge fault set} (PEF set) if and only if $r_j \leq f(j) < |E_i|=\frac{n^{k+1}\cdot(n-1)}{2}$ for each $j$, where $f(j)$ is a function of $j$ or a fixed value\cite{2023An}.

Without loss of generality, assume that $k$ is the dimension with the most faulty edges. Wer partition all the nodes of $BC_{n,k}$ into $k+1$ subsets according to the $k$-th dimension. For $m\in \langle n-1\rangle$, let $BC_{n,k}[m]$ be the subgraph induced by the nodes $\{m a_{k-1} ... a_0|a_i \in \left<n-1\right>$ for $i \in \langle k-1 \rangle\}$. The partitioned subgraph \( BC_{n,k}[m] \) is isomorphic to \( BC_{n,k-1} \). In the subsequent discussion, we abbreviate \( BC_{n,k}[m] \) as $BC[m]$ when the specific scale of the BCube is not emphasized. For the node $u=a_ka_{k-1}...a_0$, let $l_u=a_k$. Thus $u\in{V(BC[l_u])}$. For a node \( u\in V(BC[m]) \), the unique neighbor node of \( u \) in the subgraph \( BC[m'] \) is denoted by \( n^{m'}(u) \). 
Let \( E(l_1, l_2) \) represent the set of edges between the subgraphs \( BC[l_1] \) and \( BC[l_2] \). Clearly, \( |E(l_1, l_2)| = n^k \). 
For a subset $\Omega \subseteq \left< n - 1 \right>$, denote $BC[\Omega]$ the subgraph induced by the nodes \( \{i a_{k-1} ... a_{j} ... a_0 | i \in \Omega \text{ and } a_j \in \left<n-1\right>\) for \(j \in \left<k-1\right>\}\). For source set $\{s_1,s_2\}$ and destination set $\{t_1,t_2\}$, let $\Omega_1 = \left<n-1\right>\setminus{\{l_{s_1}\}},\Omega_2 = \left<n-1\right>\setminus{\{l_{t_1}\}},\Omega_3= \left<n-1\right>\setminus{\{l_{s_1},l_{s_2}\}}$, and $\Omega_4 = \left<n-1\right>\setminus{\{l_{s_1},l_{t_1}\}}$.

Figure \ref{fig:BCube3,1} shows the partition of $BC_{3,1}$ along the 1-st dimension, where $BC[0]$ is the subgraph induced by $\{00,01,02\}$, $BC[1]$ is the subgraph induced by $\{10,11,12\}$, and $BC[2]$ is the subgraph induced by $\{20,21,22\}$. Each $BC[i]$ is isomorphic to the complete graph $K_3$. For each node in $BC[i]$, it is adjacent to exactly one node in $BC[j]$ for $j\neq i$. For example, the node $00$ in $BC[0]$ is adjacent to the node $10$ in $BC[1]$ and node $20$ in $BC[2]$. $E_0$ is the set of edges inside these subgraphs, and $E_1$ is the set of edges between these subgraphs.


\begin{figure}[H]
    \centering
    \includegraphics[width=3in]{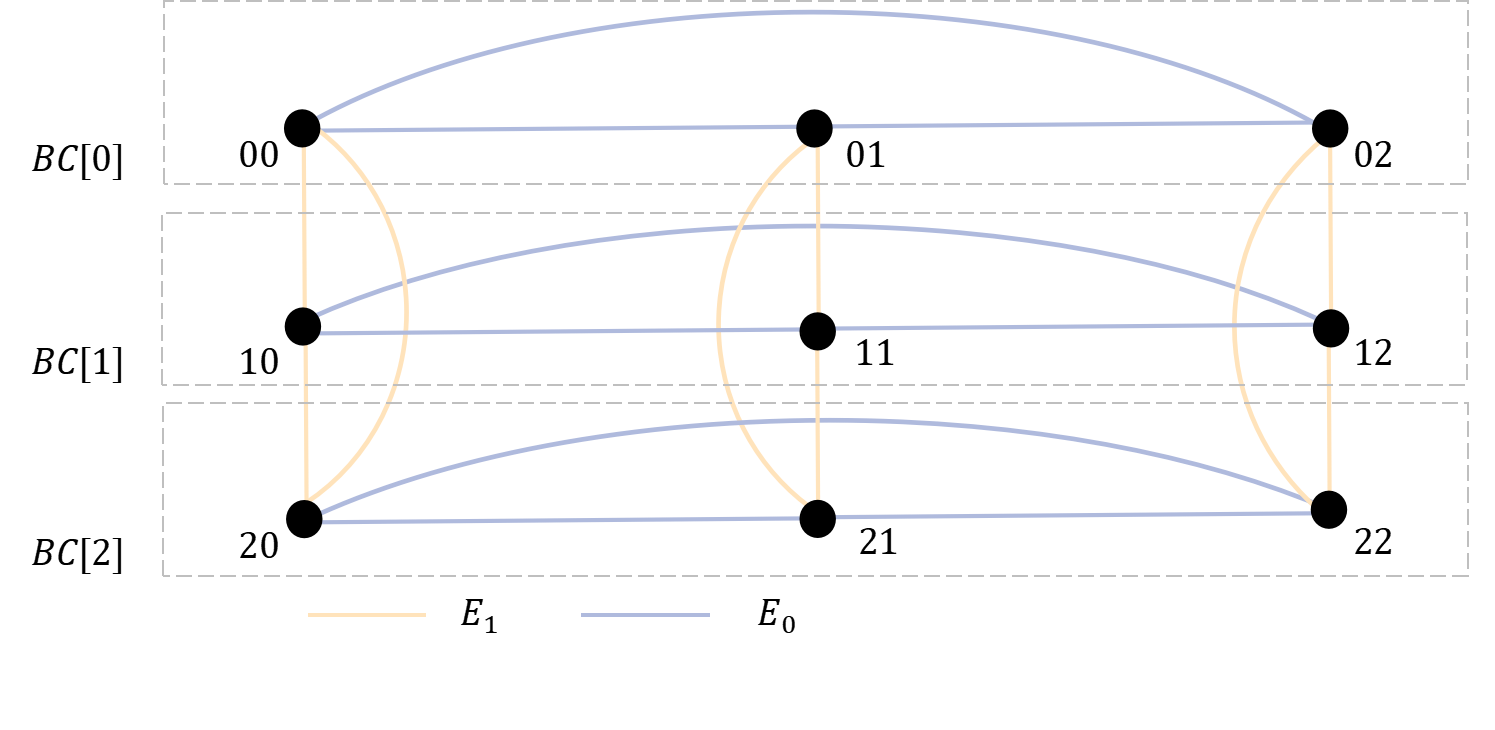}
    \caption{Illustration of $BC_{3,1}$}
\label{fig:BCube3,1}
    \end{figure} 

\section{Paired 2-disjoint path covers in BCube}

\begin{lemma} (\cite{Hsu2004Fault})\label{Kn-H}
Let $K_n$ be the complete graph with n nodes. For $n \geq 4$, $K_n$ is $(n - 4)$-edge fault-tolerant Hamiltonian-connected.
\end{lemma}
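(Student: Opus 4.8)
The plan is to derive the conclusion from an Ore-type sufficient condition for Hamiltonian-connectedness rather than constructing the paths by hand. Recall Ore's classical theorem: a graph $G$ on $n \geq 3$ vertices is Hamiltonian-connected whenever $\deg_G(u) + \deg_G(v) \geq n+1$ holds for every pair of non-adjacent vertices $u,v$. It therefore suffices to verify that $K_n - F$ satisfies this degree-sum condition whenever $|F| \leq n-4$, and then the lemma follows immediately.

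First I would fix any two vertices $u,v$ that are non-adjacent in $K_n - F$; by definition this means the edge $uv$ lies in the fault set $F$. Writing $f_u$ and $f_v$ for the numbers of faulty edges incident to $u$ and to $v$ respectively, we have $\deg_{K_n-F}(u) = (n-1) - f_u$ and $\deg_{K_n-F}(v) = (n-1) - f_v$, since every vertex has degree $n-1$ in $K_n$. The key observation is that $f_u + f_v$ double-counts the faulty edges only at the single edge $uv$: every other edge of $F$ is incident to at most one of $u$ and $v$, whereas $uv$ itself contributes to both counts. Hence $f_u + f_v \le |F| + 1 \le n-3$, which gives $\deg(u) + \deg(v) = 2(n-1) - (f_u + f_v) \ge 2(n-1) - (n-3) = n+1$. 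Since this holds for every non-adjacent pair, Ore's condition is met and $K_n - F$ is Hamiltonian-connected.

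I expect no serious obstacle along this route; the only delicate point is the double-counting bound $f_u + f_v \le |F|+1$, which is precisely where the hypothesis $|F| \le n-4$ is consumed. If one preferred to avoid invoking Ore's theorem, an induction on $n$ would also work---deleting a vertex $w \notin \{s,t\}$ to pass to $K_{n-1}$ and then reinserting $w$ into a suitable edge of the smaller path---but there the main difficulty becomes the bookkeeping of the fault distribution, ensuring that at most $n-5$ faults survive in the reduced graph and that a fault-free insertion edge still exists. The degree-sum argument sidesteps this entirely, so that is the approach I would take.

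Finally, it is worth recording that the bound $n-4$ is tight: deleting the $n-3$ edges joining some third vertex $w$ to every vertex except $s$ and $t$ forces $w$, as an internal vertex of any $s$--$t$ Hamiltonian path, to be adjacent to both endpoints, which collapses the path to $s\,w\,t$ and is impossible for $n>3$. Thus $(n-4)$ faulty edges is the most that the theorem can tolerate, confirming the optimality of the stated quantity.
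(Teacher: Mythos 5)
Your argument is correct. The paper itself gives no proof of this lemma---it is imported verbatim as a cited result from \cite{Hsu2004Fault}---so there is nothing internal to compare against; your Ore-based derivation is a legitimate self-contained substitute. The key step checks out: for a non-adjacent pair $u,v$ in $K_n-F$ the edge $uv$ must lie in $F$ and is the only edge counted by both $f_u$ and $f_v$, so $f_u+f_v\le |F|+1\le n-3$ and hence $\deg(u)+\deg(v)\ge 2(n-1)-(n-3)=n+1$, which is exactly Ore's sufficient condition for Hamiltonian-connectedness on $n\ge 3$ vertices (and the condition is vacuous when $F=\emptyset$, where $K_n$ is trivially Hamiltonian-connected). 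Your tightness example---isolating a vertex $w$ from everything except $s$ and $t$ by removing $n-3$ edges, which forces any $s$--$t$ Hamiltonian path to degenerate to $s\,w\,t$---is also sound and is a worthwhile addition, since it shows the bound $n-4$ cannot be improved. The only caveat is presentational: you are invoking Ore's Hamiltonian-connectedness theorem (the $n+1$ degree-sum version, not the more familiar $n$ version for Hamiltonian cycles), so a precise citation for that variant should accompany the argument if it were to replace the paper's reference.
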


Next, we present a result on fault-tolerant 2-DPC in complete graphs. It can be derived through careful analysis.

\begin{claim}
\label{Kn-(n-4)}
For $n\geq4$ and $n\neq7$, if $|F|\leq n-4$, then $K_n - F$ is paired 2-disjoint path coverable; for $n=7$, if $|F|\leq n-5$, then $K_7 - F$ is paired 2-disjoint path coverable.
\end{claim}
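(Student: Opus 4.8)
The plan is to reduce the construction of a paired 2-DPC to two independent Hamiltonian-path problems and then invoke Lemma~\ref{Kn-H}. Concretely, I would look for a bipartition $V(K_n)=A\sqcup B$ with $\{s_1,t_1\}\subseteq A$ and $\{s_2,t_2\}\subseteq B$, and then take $P_1$ to be a Hamiltonian path of the subgraph induced by $A$ in $K_n-F$ joining $s_1$ to $t_1$, and $P_2$ a Hamiltonian path of the subgraph induced by $B$ joining $s_2$ to $t_2$. Since $A\cap B=\emptyset$ and $A\cup B=V(K_n)$, the paths $P_1,P_2$ are automatically vertex-disjoint and cover every node, so they form a paired 2-DPC. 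Writing $a=|A|$, $b=|B|$ and $F_A,F_B$ for the faulty edges with both endpoints in $A$, resp.\ $B$, Lemma~\ref{Kn-H} supplies the two Hamiltonian paths as soon as $a,b\ge 4$, $|F_A|\le a-4$ and $|F_B|\le b-4$ (the sizes $a,b\in\{2,3\}$ being treated by hand, where a part of size $2$ needs its single edge fault-free and a part of size $3$ needs both edges of its unique path fault-free). As a sanity check I would note the equivalent reformulation: adjoining a vertex $w$ adjacent only to $t_1$ and $s_2$, a paired 2-DPC corresponds exactly to a Hamiltonian path from $s_1$ to $t_2$ in the augmented graph, since $w$ has degree $2$ and is forced to be internal; however, that augmented graph is not complete, so the bipartition above is what actually lets me apply Lemma~\ref{Kn-H}.

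The heart of the matter is then to choose the bipartition so that both budget inequalities hold simultaneously. The key observation is that only faulty edges with both endpoints on the same side are harmful; any fault lying across the cut between $A$ and $B$ is harmless. Since the four terminals are fixed but the remaining $n-4$ nodes may be assigned freely, I would route each free vertex to the side that cuts as many of its incident faults as possible. The only genuinely unavoidable internal faults are $s_1t_1$ (both terminals forced into $A$) and $s_2t_2$ (both forced into $B$); every other faulty edge is incident to a free vertex or already crosses the cut, and can in principle be diverted across the cut. Counting against the budget, a balanced split tolerates at most $(a-4)+(b-4)=n-8$ internal faults while $|F|\le n-4$, so at least four faults must be pushed into the cut — which has ample capacity but must be done while respecting each per-part bound separately rather than just their sum.

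For $n\ge 8$ a bipartition with $a,b\ge 4$ exists, and since each side then carries roughly $n/2$ free vertices against only $O(n)$ sparsely distributed faults, the diversion above can be carried out comfortably, readjusting $a$ against $b$ whenever one side threatens to exceed its bound; this is the routine regime. The difficult regime, and the main obstacle, is small $n$: when $4\le n\le 7$ no bipartition has both parts of size $\ge 4$, so at least one side has size $2$ or $3$ and therefore tolerates essentially no internal fault, leaving no slack in the per-part budgets. Here the argument becomes a finite but delicate case analysis over the placement of the (at most $n-4$) faults relative to the terminals, with repeated re-partitioning to force offending faults into the cut. I expect this to go through with the bound $n-4$ for $n\in\{4,5,6\}$ but to break down for a specific fault pattern when $n=7$ and $|F|=3$ — the largest case still forced to use a size-$3$ part, which carries zero fault slack — which is exactly why the admissible bound must be lowered to $n-5$ there. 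Confirming that $n=7$ is the unique exception, i.e.\ that the obstruction genuinely occurs at $|F|=3$ but nowhere else, is the crux of the promised careful analysis.
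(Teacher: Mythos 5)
The paper itself offers no proof of Claim~\ref{Kn-(n-4)} --- it only asserts that the statement ``can be derived through careful analysis'' --- so there is nothing to compare your argument against; it must stand on its own. Your framework is the right one, and is in fact lossless: a paired 2-DPC of $K_n-F$ is exactly a bipartition $V=A\sqcup B$ with $s_1,t_1\in A$ and $s_2,t_2\in B$ together with an $(s_1,t_1)$-Hamiltonian path of the subgraph induced by $A$ and an $(s_2,t_2)$-Hamiltonian path of the subgraph induced by $B$. But the proposal stops at the level of a plan. For $n\ge 8$ you assert the faults ``can be diverted comfortably'' with no argument; for $4\le n\le 7$, which you yourself identify as the hard regime, you defer entirely to a case analysis you do not perform and explicitly leave ``the crux'' (the behaviour at $n=7$) unconfirmed. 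Since the entire content of the claim \emph{is} that careful analysis, nothing has actually been proved.

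Two concrete defects. First, the budget you import from Lemma~\ref{Kn-H}, namely $|F_A|\le |A|-4$, is sufficient but far from necessary for an $(s_1,t_1)$-Hamiltonian path in the induced subgraph on $A$ minus $F$: the fault $s_1t_1$ is harmless in any part with at least one internal vertex, because such a Hamiltonian path never uses that edge. Under your accounting the single fault $F=\{s_1t_1\}$ already defeats the balanced $4+4$ split for $n=8$, although a 2-DPC plainly exists; so the counting behind ``at least four faults must be pushed into the cut'' and the assertion that a size-3 part carries ``zero fault slack'' must both be redone with the correct per-part condition (for $|A|=\{s_1,t_1,u\}$ one needs $s_1u$ and $ut_1$ fault-free, while $s_1t_1$ may be faulty). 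Second, the $n=7$ exception you merely ``expect'' is real and has a specific shape: if the three faults form a triangle on the three non-terminal vertices $u,v,w$, then each of the two paths can contain at most one of $u,v,w$ as an internal vertex (two of them would necessarily be consecutive on a path, forcing a faulty edge), so at most two of the three can be covered and no paired 2-DPC exists. A complete proof of the positive direction must verify that every admissible fault configuration in the small cases avoids this kind of obstruction --- precisely the finite check you have postponed.
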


Note that $BC_{3,k}$ is isomorphic to $3$-ary $(k + 1)$-cube. The following lemma is a partial result obtained from \cite[Theorem 2]{2024Paired}.
\begin{lemma}(see \cite{2024Paired})  Let $F$ be any faulty
edge set of $BC_{3,k}$ for
 $k \geq 1$. If the edges in \( F \) satisfy the following conditions:

(1) \( f(0) = 0,f(1)=1 \);

(2) \( f(i)=3^i - 2 \; \text{ for each } i \in \{2,3,...,k\}\);

(3) $ |F|\leq\frac{3^{k+1}-3}{2}-2k $,

then $BC_{3,k}-F$ is paired 2-disjoint path coverable. 
\end{lemma}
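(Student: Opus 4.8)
The plan is to treat this as the radix-$3$ specialization of the general $k$-ary $n$-cube paired 2-DPC theorem \cite[Theorem 2]{2024Paired}. Since $BC_{3,k}$ is isomorphic to the $3$-ary $(k+1)$-cube, the quickest route is to substitute radix $3$ and dimension $k+1$ into that theorem and check that its per-dimension bounds collapse to the stated $f(0)=0$, $f(1)=1$, and $f(i)=3^i-2$ for $2\le i\le k$, and that its global bound collapses to $\frac{3^{k+1}-3}{2}-2k$; one verifies the latter by noting that $0+1+\sum_{i=2}^{k}(3^i-2)$ sums to exactly that value, so the per-dimension budgets and the global budget are mutually consistent and tight. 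I would state this correspondence as the formal justification, but to keep the argument self-contained I would also sketch the underlying induction, which is where the real content lies.

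First I would set up an induction on $k$. The base case $k=1$ forces $|F|\le\frac{3^2-3}{2}-2=1$, with the single possible fault lying in the inter-copy dimension (since $f(0)=0$); this is a finite check over the placements of $s_1,s_2,t_1,t_2$ among the three $K_3$-copies of $BC_{3,1}$ in Figure \ref{fig:BCube3,1}. For the inductive step I would partition $BC_{3,k}$ along its top dimension into $BC[0]$, $BC[1]$, $BC[2]$, each isomorphic to $BC_{3,k-1}$, so that the dimension-$k$ faults become inter-copy faults distributed over the three blocks $E(0,1)$, $E(0,2)$, $E(1,2)$ of $3^k$ edges each, while the faults of dimension at most $k-1$ stay inside the copies. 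The core is a case analysis on how $\{s_1,s_2,t_1,t_2\}$ distribute over the three copies: inside a copy that carries terminals I would invoke the induction hypothesis to obtain a paired 2-DPC, while inside a terminal-free copy I would use the Hamiltonian-connectedness of $BC_{3,k-1}$ to obtain a spanning path that a global route can detour through, and then I would splice the pieces together across fault-free inter-copy edges.

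The hard part will be the fault bookkeeping coupled to the stitching. Two things must hold at once. First, after removing the at most $3^k-2$ top-dimension faults, the remaining faults restricted to each subcube must still satisfy the PEF bounds required of $BC_{3,k-1}$: here $r_0=0$ forces every copy to be dimension-$0$-fault-free, the at most one dimension-$1$ fault lands in a single copy, and the fast growth of $f(i)=3^i-2$ leaves enough slack that after sorting the per-dimension counts no copy is overloaded — but this monotonicity-after-sorting argument has to be made carefully rather than merely dimension by dimension. Second, every configuration needs enough fault-free crossing edges landing on compatible endpoints: since each block $E(l_1,l_2)$ retains at least $3^k-(3^k-2)=2$ fault-free edges, there is always room, but realizing the attachment demands choosing the entry/exit nodes coherently in the two copies being joined. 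I expect the genuinely awkward configurations to be the extremes — all four terminals inside one copy (so the other two copies must be covered by Hamiltonian paths attached through a single surviving crossing-edge pair) and the splits where some copy contains no terminal (forcing a detour of one of the two paths through that copy) — and most of the effort to go into verifying that the surviving crossing edges can always be positioned to realize those detours and attachments.
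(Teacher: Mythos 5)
Your primary justification---that $BC_{3,k}$ is isomorphic to the $3$-ary $(k+1)$-cube and the lemma is the radix-$3$ specialization of \cite[Theorem 2]{2024Paired}, with the arithmetic check that the per-dimension budgets sum to $\frac{3^{k+1}-3}{2}-2k$---is exactly how the paper handles this statement, which it presents as a cited result with no independent proof. The additional induction sketch is supererogatory; the citation alone suffices and matches the paper's approach.
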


Next we give the definition of partitioned edge fault (PEF) model used in our paper:
\begin{definition}\label{PEF}
 For \( n \geq 4, k \geq 0 \), a fault edge set $F$ of \( BC_{n,k} \) is called an f-PEF if the edges in \( F \) satisfy the following conditions:

(1) $ f(0) \leq \left\{
\begin{aligned}
& n-5 & \text{ if } n = 7\\
& n-4  & \text{ otherwise }
\end{aligned}
\right.
$

(2) For \( 1 \leq i \leq k \):
$$ f(i) \leq \left\{
\begin{aligned}
& \max\{0, n^i-5\} & \text{ if } 4 \leq n \leq 9\\
& \left\lceil \frac{n^i - 1}{2} \right\rceil \cdot(n-2) - \frac{3n}{2}  & \text{ if } n \geq 10.
\end{aligned}
\right.
$$
\end{definition}

\begin{lemma}\label{disjoint}
For \( n \geq 4, k \geq 1 \), let the fault edge set \( F \) be an $f$-PEF in \( BC_{n,k} \). Let $P_1$ and $P_2$ be paired 2-DPC in $BC[m]$ for some integer $m$, and $\Omega \subseteq \left< n - 1 \right>$ with $|\Omega| \geq n-2$ and $m\notin \Omega$. Then \( E(P_1) \cup E(P_2) \) contains at least one edge \( (x, x^*) \) such that there exist distinct \( l_1, l_2 \in \Omega\) satisfying 
\( (x, n^{l_1}(x)) \notin F \) and \( (x^*, n^{l_2}(x^*)) \notin F \).
\end{lemma}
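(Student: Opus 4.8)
The plan is to count the edges of the path cover that \emph{cannot} serve as the required edge and show this count is strictly less than $|E(P_1)\cup E(P_2)|$. For a node $x\in V(BC[m])$ put $A(x)=\{l\in\Omega : (x,n^{l}(x))\notin F\}$, the set of labels in $\Omega$ reachable from $x$ by a fault-free dimension-$k$ edge. Since $x$ has exactly one neighbour in each $BC[l]$, it has exactly $|\Omega|\ge n-2$ cross edges into $BC[\Omega]$, so $|A(x)|$ equals $|\Omega|$ minus the number of faulty such edges. Call an edge $(x,x^{*})\in E(P_1)\cup E(P_2)$ \emph{good} if one can choose distinct $l_1\in A(x)$ and $l_2\in A(x^{*})$; this is exactly the conclusion we want for that edge, and it fails only when $A(x)=\emptyset$, or $A(x^{*})=\emptyset$, or $A(x)=A(x^{*})$ is a common singleton. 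As $P_1,P_2$ are two nontrivial disjoint paths covering all $n^{k}$ nodes of $BC[m]\cong BC_{n,k-1}$, we have $|E(P_1)\cup E(P_2)|=n^{k}-2$, so it suffices to bound the number of bad edges by $n^{k}-3$.

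To count bad edges I would set $B_0=\{x : A(x)=\emptyset\}$ and $B_1=\{x : |A(x)|=1\}$. Each bad edge is either incident to $B_0$ or is induced on $B_1$ (carrying a common singleton label). The path cover has maximum degree $2$, so at most $2|B_0|$ edges meet $B_0$; and since $P_1\cup P_2$ is a linear forest, the edges induced on $B_1$ number at most $|B_1|$. Hence the number of bad edges is at most $2|B_0|+|B_1|$. Next I convert this to a statement about faults: a node in $B_0$ has all $\ge n-2$ of its cross edges into $\Omega$ faulty, and a node in $B_1$ has at least $n-3$ of them faulty; all these are distinct dimension-$k$ edges lying in $F_k$. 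Since $k$ is the dimension with the most faults, $|F_k|\le f(k)$, giving $(n-2)|B_0|+(n-3)|B_1|\le f(k)$. Because $\frac{2}{n-2}\ge\frac{1}{n-3}$ precisely for $n\ge 4$, this yields the key inequality $2|B_0|+|B_1|\le \frac{2}{n-2}\,f(k)$.

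It then remains to check $\frac{2}{n-2}\,f(k)<n^{k}-2$ in each regime of Definition~\ref{PEF}. For $4\le n\le 9$ (with $f(k)=\max\{0,n^{k}-5\}$) this reduces after clearing denominators to $(n-4)n^{k}>2n-14$, which holds in every case, while the degenerate subcase $f(k)=0$ (i.e.\ $n=4,k=1$) is immediate. For $n\ge 10$ one computes $\frac{2}{n-2}f(k)=2\lceil (n^{k}-1)/2\rceil-\frac{3n}{n-2}$, and since $2\lceil (n^{k}-1)/2\rceil\le n^{k}$ while $\frac{3n}{n-2}>2$, the bound $<n^{k}-2$ follows. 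I expect this large-$n$ estimate to be the main obstacle: the naive bound $2|B|$ on bad edges (treating $B_0$ and $B_1$ alike) is too weak here, and it is exactly the $-\frac{3n}{2}$ slack built into $f(k)$, combined with the $2$-versus-$1$ weighting of $B_0$ and $B_1$, that makes the inequality go through. Having thus forced a good edge to exist, the choice of $l_1,l_2$ for it delivers the edge $(x,x^{*})$ claimed in the lemma.
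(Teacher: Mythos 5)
Your proof is correct and rests on essentially the same idea as the paper's: a counting argument showing that the number of faulty dimension-$k$ cross edges needed to make every edge of $E(P_1)\cup E(P_2)$ ``bad'' would exceed the budget $|F_k|\le f(k)$. The paper charges at least $|\Omega|$ faults to each bad edge of a matching of size $\lceil (n^k-2)/2\rceil$ inside the linear forest and checks $\lceil\frac{n^k-2}{2}\rceil|\Omega|>|F_k|$, whereas you charge faults per node via $B_0$ and $B_1$ and compare $\frac{2}{n-2}f(k)$ with $n^k-2$; your bookkeeping is more explicit, but the underlying pigeonhole is the same.
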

\begin{proof}
In \( BC[m] \), \( P_1 \) and \( P_2 \) together contain \( n^k - 2 \) edges. Therefore, there are \( \left\lceil \frac{n^k - 2}{2} \right\rceil \) pairs of disjoint edges. The proof is as follows:

(1) For \( 4 \leq n \leq 9 \):

Since \( |F_k| \leq n^k - 5 \), we have \( \frac{n^k - 2}{2} \cdot |\Omega| - F_k \geq \frac{n^k - 2}{2} (n - 2) - (n^k - 5)= (n^k - 2) \left( \frac{1}{2}(n - 2) - 1 \right) + 3= (n^k - 2) \left( \frac{n}{2} - 2 \right) + 3 \). The function $(n^k - 2) \left( \frac{n}{2} - 2 \right) + 3$ is increasing with $n$ and $k$. For \( n = 4 \), \( k = 1 \), we have \(
(4 - 2) \cdot \left( \frac{4}{2} - 2 \right) + 3 = 3 > 1\)

(2) For \( n \geq 10 \):

Since \( |F_k| \leq \left\lceil \frac{n^k - 1}{2} \right\rceil (n - 2) - \frac{3n}{2}\), we have\( \frac{n^k - 2}{2} \cdot |\Omega| - F_k \geq \frac{n^k - 2}{2} (n - 2) - \frac{n^k}{2} (n - 2) + \frac{3n}{2} = \frac{n^k}{2} (n - 2) - (n - 2) - \frac{n^k}{2} (n - 2) +\frac{3n}{2} = 2 + \frac{n}{2}> 1\)
 
Therefore, at least one pair of edges \( (x, x^*) \) exists such that \( (x, n^{l_1}(x)) \) and \( (x^*, n^{l_2}(x^*)) \notin F \), where $l_1, l_2\in \langle n\rangle\setminus m$ are distinct.
\end{proof}

\begin{lemma}(\cite{0Link})\label{hcbc}
 For \( n \geq 4 \), \( k \geq 0 \), let \( F \) be the set of faulty edges in \( BC_{n,k} \), and suppose the edges in \( F \) satisfy the following conditions:
 
(1) \( f(0) = n-4 \)

(2) For \( 1 \leq i \leq k \):
$$ f(i) \leq \left\{
\begin{aligned}
&n^i - 2 & \text{ if } 4 \leq n \leq 9\\
& \left\lceil \frac{n^i - 1}{2} \right\rceil \cdot(n-1) - 1  & \text{ if } n \geq 10.
\end{aligned}
\right.
$$
In this case, \( BC_{n,k} - F \) is Hamiltonian-connected.
\end{lemma}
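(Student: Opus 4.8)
The plan is to argue by induction on $k$, partitioning $BC_{n,k}$ along the top dimension $k$ and then stitching the $n$ resulting subcubes together through fault-free dimension-$k$ edges. The whole difficulty is concentrated in the last step, since the per-subcube structure is handled for free by the induction hypothesis.

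For the base case $k=0$ we have $BC_{n,0}\cong K_n$, and the hypothesis reduces to $|F|=|F_0|\le n-4$, so $K_n-F$ is Hamiltonian-connected by Lemma~\ref{Kn-H}. For the inductive step I would split $BC_{n,k}$ along dimension $k$ into $BC[0],\dots,BC[n-1]$, each isomorphic to $BC_{n,k-1}$. Because $E_0,\dots,E_{k-1}$ are partitioned among these subcubes, and because the per-dimension bounds in the statement for $BC_{n,k}$ are identical to those for $BC_{n,k-1}$ in dimensions $0,\dots,k-1$, the faults that land inside any fixed $BC[m]$ satisfy $|F_i\cap E(BC[m])|\le|F_i|\le f(i)$ and hence form an admissible fault set at level $k-1$. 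Thus each $BC[m]-F$ is Hamiltonian-connected by the induction hypothesis, and only the $\le f(k)$ faults of dimension $k$ remain to be managed.

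Given $s,t$, I would build the global $s$--$t$ Hamiltonian path by traversing the subcubes and splicing. The elementary move is a \emph{detour}: if a path already built through some subcubes uses an edge $(x,x^*)$, and there is an unvisited subcube $BC[c]$ with $(x,n^c(x))\notin F$, $(x^*,n^c(x^*))\notin F$ and $n^c(x)\ne n^c(x^*)$, then $(x,x^*)$ can be replaced by the concatenation $x,\,n^c(x),\,\bigl(\text{Hamiltonian path of }BC[c]\text{ between }n^c(x)\text{ and }n^c(x^*)\bigr),\,n^c(x^*),\,x^*$, the inner path existing by the Hamiltonian-connectivity of $BC[c]$. A counting estimate in the spirit of Lemma~\ref{disjoint}, adapted to the larger fault budget of the present statement, shows that a path carrying roughly $n^k/2$ disjoint edges always offers an edge admitting such a detour into a prescribed target subcube. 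The argument then splits on the location of $s,t$: when $l_s=l_t$ I would first fix an $s$--$t$ Hamiltonian path inside that subcube and use its internal edges as detour anchors; when $l_s\ne l_t$ I would instead route a path out of $BC[l_{s}]$ across fault-free dimension-$k$ edges and arrange that the construction terminates in $BC[l_{t}]$ at $t$.

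The main obstacle is the fault concentration permitted in dimension $k$, and here the two fault regimes behave differently. For $4\le n\le 9$ we have $f(k)\le n^k-2<n^k=|E(l_1,l_2)|$, so even if the adversary dumps every dimension-$k$ fault into a single matching, at least $2$ fault-free edges survive between \emph{every} pair of subcubes; combined with the endpoint freedom of each subcube's Hamiltonian-connectivity, a linear ``snake'' through the subcubes succeeds, and the only care needed is matching up entry/exit nodes. For $n\ge 10$, however, $f(k)=\lceil (n^k-1)/2\rceil(n-1)-1$ can exceed $n^k$, so an entire matching $E(l_1,l_2)$ may be wiped out and no fixed ordering of the subcubes is guaranteed to connect. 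This is the hard part: the merge order must be made flexible, drawing each connecting edge from whichever already-incorporated subcube still offers a fault-free pair of cross edges to the next one, and one must argue globally that a feasible choice always remains—each subcube is incident to $n^k(n-1)$ dimension-$k$ edges while $|F_k|\le f(k)<\tfrac12 n^k(n-1)$, so fewer than half of any subcube's cross edges can be faulty. Verifying that the slack in these estimates never degenerates in the small cases $n\in\{4,\dots,9\}$ and small $k$ is exactly where the exceptional constants in the fault bounds come from, and is the most delicate bookkeeping in the proof.
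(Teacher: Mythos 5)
First, note that the paper does not prove Lemma~\ref{hcbc} at all: it is imported verbatim from \cite{0Link} and used as a black box (via Corollary~\ref{H-path}), so there is no in-paper proof to compare your argument against. Judged on its own terms, your skeleton --- induction on $k$, base case $K_n$ via Lemma~\ref{Kn-H}, partition along dimension $k$, and splicing of subcube Hamiltonian paths through fault-free dimension-$k$ edges --- is the standard route, and it matches the machinery this paper does develop for its own theorem (Lemma~\ref{disjoint}, the contracted graph $T(\Omega,F)$, Lemma~\ref{sub-connected-l}, Corollary~\ref{sub-connected}).

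Two steps are genuinely incomplete, however. First, the claim that the faults landing inside a subcube $BC[m]$ automatically form an admissible fault set at level $k-1$ is not immediate: the PEF bounds are imposed on the \emph{sorted} sequence of per-dimension fault counts, and restricting to $BC[m]$ can change the ranks, so one needs the re-indexing argument that this paper spells out as Claim~2 inside the proof of Theorem~\ref{main}; your inequality $|F_i\cap E(BC[m])|\le |F_i|\le f(i)$ quietly assumes the bound is indexed by dimension rather than by rank. Second, and more seriously, your treatment of the $n\ge 10$ regime names the difficulty but does not resolve it. The observation that fewer than half of a subcube's $n^k(n-1)$ cross edges can be faulty does not by itself yield a feasible merge order: what is actually needed is that the contracted graph (subcubes adjacent when enough fault-free cross edges survive) is Hamiltonian-connected between the two prescribed end-subcubes, which requires bounding the number of subcube \emph{pairs} whose entire matching $E(l_1,l_2)$ is nearly destroyed --- a count of the form $(|\Omega|-3)(n^k-2) > f(k)$, exactly as in the proof of Corollary~\ref{sub-connected} --- followed by a lifting step in which the splice vertices must be kept distinct from the endpoints already in use. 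Without that counting, the ``flexible merge order'' could in principle fail, e.g.\ if the surviving cross edges incident to $BC[l_t]$ all lead to subcubes already consumed by the construction. The approach is salvageable (it is essentially how \cite{0Link} proceeds), but as written the hardest step is asserted rather than proved.
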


Since the number of faulty edges permitted in Lemma \ref{hcbc} exceeds the maximum number of faulty edges that can be accommodated in $f$-PEF, the following conclusion can be obtained:

\begin{corollary}\label{H-path}
For \( n \geq 4 \), \( k \geq 0 \), suppose the fault edge set \( F \) in \( BC_{n,k} \) is an f-PEF. Then \( BC_{n,k} - F \) is Hamiltonian-connected.
\end{corollary}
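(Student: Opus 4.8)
The plan is to obtain this corollary as an immediate consequence of Lemma~\ref{hcbc}, by verifying that every $f$-PEF set is also admissible for that lemma. Both Definition~\ref{PEF} and Lemma~\ref{hcbc} constrain the sorted per-dimension fault counts $r_0 \le r_1 \le \dots \le r_k$ through a threshold function $f(\cdot)$, so it suffices to show that, for each index and each relevant range of $n$, the $f$-PEF threshold never exceeds the corresponding threshold of Lemma~\ref{hcbc}. Once this termwise domination is established, the faulty set $F$ satisfies the hypotheses of Lemma~\ref{hcbc} verbatim, and Hamiltonian-connectivity follows without any further construction.

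First I would compare the dimension-$0$ thresholds. For $n \ne 7$ both prescriptions allow $n-4$, while for $n = 7$ the $f$-PEF cap $n-5$ is strictly smaller; in either case the bound on $r_0$ is no larger than the value $n-4$ required in Lemma~\ref{hcbc}. Next, for $1 \le i \le k$ in the range $4 \le n \le 9$, I would note that $\max\{0,\,n^i-5\} \le n^i-2$ holds trivially, since $n^i \ge n \ge 4$ forces both $n^i-5 \le n^i-2$ and $0 \le n^i-2$. For the remaining range $n \ge 10$, I would subtract the $f$-PEF bound from the Lemma~\ref{hcbc} bound and simplify: the gap equals $\lceil \frac{n^i-1}{2}\rceil + \frac{3n}{2} - 1$, which is strictly positive because the ceiling term is at least $1$ and $\frac{3n}{2} \ge 15$.

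Having checked all termwise inequalities, I would conclude that $F$ meets conditions~(1) and~(2) of Lemma~\ref{hcbc}, whence $BC_{n,k}-F$ is Hamiltonian-connected. This argument is purely arithmetic and requires no path construction; the only step demanding any attention is the $n \ge 10$ case, where the multiplier on $\lceil \frac{n^i-1}{2}\rceil$ increases from $n-2$ in the $f$-PEF bound to $n-1$ in Lemma~\ref{hcbc}, and one must confirm that this per-dimension gain of one ceiling unit more than offsets replacing the subtracted constant $1$ by $\frac{3n}{2}$. As the computation above shows, it does, so this is the main---and essentially the only---point of the proof.
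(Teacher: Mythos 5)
Your proposal is correct and takes exactly the same route as the paper: the corollary is derived by observing that every $f$-PEF threshold is dominated termwise by the corresponding threshold in Lemma~\ref{hcbc}, so the lemma applies directly. The paper merely asserts this domination in one sentence, whereas you verify the arithmetic (including the $n \geq 10$ case, where the gap $\lceil \frac{n^i-1}{2}\rceil + \frac{3n}{2} - 1 > 0$) explicitly and correctly.
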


Now we define the \emph{contracted graph} $T(\Omega, F)$ of $BC_{n,k}$ for the fault edge set $F$ and $\Omega \subseteq \left< n - 1 \right>$ with $|\Omega| \geq 2$ as follows.  $A$  node $c_i$ in $T(\Omega, F)$ corresponds to the subgraph $BC[i]$, and $c_i$ is adjacent to $c_j$ if and only if their corresponding subgraphs $BC[i]$ and $BC[j]$ are connected by at least three fault-free edges under $F$.

\begin{lemma}(\cite{0Link})\label{sub-connected-l}
Let $\Omega \subseteq \left< n - 1 \right>$ with $|\Omega| \geq 2$, and $F$ be the fault edge set of $BC_{n,k}$ such that $BC[\ell] - F$ is Hamiltonian-connected
 for each $\ell \in \Omega$. Let $s \in V(BC[i])$ and $t \in V(BC[j])$, where $i, j \in \Omega$ are distinct. If there exists a Hamiltonian path from $c_i$ to $c_j$ in $T(\Omega,F)$, then so is the existence of an ($s$,$t$)-Hamiltonian
 path in $BC[\Omega] - F$.
\end{lemma}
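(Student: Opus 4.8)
The plan is to convert the combinatorial Hamiltonian path supplied in the contracted graph $T(\Omega,F)$ into a concrete route through the subcubes, gluing together internal Hamiltonian paths of the individual $BC[\ell]$'s with carefully chosen bridging edges. Write $m=|\Omega|$ and let $c_i=c_{p_1},c_{p_2},\dots,c_{p_m}=c_j$ be the order in which the given Hamiltonian path of $T(\Omega,F)$ visits the cluster-nodes, so that $p_1=i$, $p_m=j$, and each consecutive pair $c_{p_r},c_{p_{r+1}}$ is adjacent in $T(\Omega,F)$. By the definition of the contracted graph, this adjacency means that $BC[p_r]$ and $BC[p_{r+1}]$ are joined by at least three fault-free edges of $E(p_r,p_{r+1})$. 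I would build the desired $(s,t)$-path by sweeping through the clusters in this order: the path enters $BC[p_1]$ at $s$, runs along a Hamiltonian path of $BC[p_1]-F$ to some exit node $x_1$, crosses a fault-free bridging edge to $BC[p_2]$, and so on, finally entering $BC[p_m]=BC[j]$ and ending at $t$.

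The freedom needed to choose the exit nodes comes entirely from the hypothesis that each $BC[\ell]-F$ with $\ell\in\Omega$ is Hamiltonian-connected: once an entry node $a_r$ and an exit node $x_r\neq a_r$ of $BC[p_r]$ have been fixed, an internal Hamiltonian path joining them is guaranteed to exist. Processing the clusters from left to right, the entry node $a_1=s$ is prescribed, and for $r\ge 2$ the entry node $a_r=n^{p_r}(x_{r-1})$ is forced by the previously chosen exit $x_{r-1}$; in the last cluster the exit is prescribed to be $t$. Thus the only thing left to decide at each junction $p_r\to p_{r+1}$ (for $1\le r\le m-1$) is which fault-free bridging edge to cross, i.e.\ which exit node $x_r\in V(BC[p_r])$ to use.

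The heart of the argument --- and the step I expect to be the \textbf{main obstacle} --- is verifying that a legal choice of $x_r$ always exists, which is exactly what forces the constant three in the definition of $T(\Omega,F)$. The candidate exit nodes are the $BC[p_r]$-endpoints of the fault-free edges of $E(p_r,p_{r+1})$, of which there are at least three and which are pairwise distinct because in $BC_{n,k}$ every node of $BC[p_r]$ has a unique neighbour in $BC[p_{r+1}]$. A candidate $x_r$ is \emph{forbidden} in two situations: (i) when $x_r$ coincides with the already-fixed entry node $a_r$ of $BC[p_r]$, which would collapse the internal path to a single node; and (ii) when $p_{r+1}=j$ is the last cluster and $n^{p_{r+1}}(x_r)=t$, since the entry of the final cluster must differ from its prescribed exit $t$. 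Each of these rules out at most one candidate, so at most two of the at-least-three candidates are forbidden, leaving at least one admissible exit node $x_r$. Choosing it fixes the bridging edge and, simultaneously, the entry node $a_{r+1}=n^{p_{r+1}}(x_r)$ of the next cluster, allowing the sweep to continue.

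Concatenating the internal Hamiltonian paths of $BC[p_1]-F,\dots,BC[p_m]-F$ with the $m-1$ chosen fault-free bridging edges then produces a single fault-free path from $s$ to $t$. Since the sweep visits every cluster $BC[\ell]$ with $\ell\in\Omega$ exactly once and each internal piece is a Hamiltonian path of its cluster, the concatenated path covers every node of $BC[\Omega]$, so it is the required $(s,t)$-Hamiltonian path of $BC[\Omega]-F$. A clean way to present this rigorously is by induction on $m=|\Omega|$: the base case $m=2$ is precisely the single-junction argument above (with forbidden values $s$ and $n^{i}(t)$), and for the inductive step one peels off the first cluster by choosing $x_1\neq s$ as above, setting $s'=n^{p_2}(x_1)$, and applying the inductive hypothesis to $\Omega\setminus\{p_1\}$ with source $s'$ and destination $t$; here $T(\Omega\setminus\{p_1\},F)$ is the induced subgraph of $T(\Omega,F)$ and inherits the Hamiltonian path $c_{p_2}\cdots c_{p_m}$, while $s'\neq t$ holds automatically because $p_2\neq j$ when $m\ge 3$.
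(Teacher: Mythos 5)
Your argument is correct, and it is the standard (and essentially only) way to prove this kind of gluing lemma: order the clusters along the Hamiltonian path of $T(\Omega,F)$, use Hamiltonian-connectedness of each $BC[\ell]-F$ for the internal segments, and observe that the ``at least three fault-free edges'' threshold in the definition of $T(\Omega,F)$ leaves an admissible bridging edge after excluding the at most two candidates ruled out by the prescribed entry node and, at the final junction, by $t$. Note that the paper itself gives no proof of this lemma --- it is quoted from the cited reference --- so there is nothing to compare against; your write-up correctly identifies the one delicate point (why three edges suffice) and handles it.
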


\begin{corollary}\label{sub-connected}
 Let $\Omega \subseteq \left< n - 1 \right>$ with $|\Omega| \geq n-2$.
 Assume that $F$ is the fault edge set of $BC_{n,k}$ for $n \geq 4$ and $k\geq 1$. If $F$ is an f-PEF, then for any two distinct integers $i, j  \in \Omega$, every two nodes $s \in V(BC[i])$ and $t \in V(BC[j])$, there is an ($s$,$t$)-Hamiltonian
 path in $BC[\Omega] - F$.
\end{corollary}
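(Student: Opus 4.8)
The plan is to reduce the corollary to two ingredients already available: the Hamiltonian-connectivity of each partitioned subgraph (Corollary~\ref{H-path}) and the lifting lemma for the contracted graph (Lemma~\ref{sub-connected-l}). Recall that Lemma~\ref{sub-connected-l} produces an $(s,t)$-Hamiltonian path in $BC[\Omega]-F$ as soon as (i) every $BC[\ell]-F$ with $\ell\in\Omega$ is Hamiltonian-connected, and (ii) the contracted graph $T(\Omega,F)$ admits a Hamiltonian path between the nodes $c_i$ and $c_j$ corresponding to the subgraphs containing $s$ and $t$. Thus the proof splits into verifying (i) and (ii).

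For ingredient (i), I would first show that the restriction $F\cap E(BC[\ell])$ is again an $f$-PEF, now in $BC[\ell]\cong BC_{n,k-1}$. Partitioning along the $k$-th dimension distributes each lower-dimensional edge class among the $n$ subgraphs, so the number of dimension-$i$ faults inside any single $BC[\ell]$ is at most the global count $|F_i|$. Since the per-dimension $f$-PEF thresholds for $BC_{n,k-1}$ in dimensions $0,\dots,k-1$ coincide with those of $BC_{n,k}$, and since passing to componentwise-smaller fault counts can only lower the sorted profile $r_0\le\cdots\le r_{k-1}$, the restricted set still satisfies Definition~\ref{PEF}. Corollary~\ref{H-path} then gives that $BC[\ell]-F$ is Hamiltonian-connected for every $\ell\in\Omega$.

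For ingredient (ii), I would view $T(\Omega,F)$ as the complete graph $K_{|\Omega|}$ with some missing edges. Two subgraphs $BC[a]$ and $BC[b]$ fail to be adjacent in $T(\Omega,F)$ only when at least $n^k-2$ of the $n^k$ connecting dimension-$k$ edges are faulty; as the edge sets between distinct pairs are disjoint and all lie in $F_k$, the number $p$ of missing edges satisfies $p\le |F_k|/(n^k-2)\le f(k)/(n^k-2)$. For $4\le n\le 9$ we have $f(k)=\max\{0,n^k-5\}<n^k-2$, forcing $p=0$, so $T(\Omega,F)$ is complete and a Hamiltonian path between $c_i$ and $c_j$ is immediate. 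For $n\ge 10$, substituting $f(k)=\lceil (n^k-1)/2\rceil(n-2)-\frac{3n}{2}$ and using $\lceil (n^k-1)/2\rceil\le n^k/2$ bounds $p$ by at most $(n-2)/2$; since $|\Omega|\ge n-2$ and $(n-2)/2\le n-6\le|\Omega|-4$ exactly when $n\ge10$, we get $p\le|\Omega|-4$. Lemma~\ref{Kn-H}, applied to $T(\Omega,F)$ regarded as $K_{|\Omega|}$ with $p$ faulty edges, then shows $T(\Omega,F)$ is Hamiltonian-connected, yielding the required path. Feeding (i) and (ii) into Lemma~\ref{sub-connected-l} finishes the proof.

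I expect the crux to be the $n\ge10$ half of ingredient (ii): unlike the small-$n$ regime, the dimension-$k$ fault budget $f(k)$ is large enough to sever several subgraph pairs at once, so one must bound the number of missing edges in $T(\Omega,F)$ tightly and confirm it stays within the $|\Omega|-4$ slack demanded by Lemma~\ref{Kn-H}. This is the single place where the exact form of the $f$-PEF thresholds is genuinely used, and it is precisely the cutoff $n\ge10$ that makes $(n-2)/2\le n-6$ hold. A minor secondary point is the edge case $|\Omega|=2$ (possible for $n=4$), where the Hamiltonian path in $T(\Omega,F)$ degenerates to the single edge joining $c_i$ and $c_j$.
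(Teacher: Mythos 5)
Your proposal is correct and follows essentially the same route as the paper: verify that each $BC[\ell]-F$ is Hamiltonian-connected via Corollary~\ref{H-path}, show that $T(\Omega,F)$ is complete for $4\le n\le 9$ and has at most $|\Omega|-4$ missing edges for $n\ge 10$ (so Lemma~\ref{Kn-H} applies), and then invoke Lemma~\ref{sub-connected-l}. Your direct bound $p\le f(k)/(n^k-2)\le (n-2)/2\le|\Omega|-4$ is a slightly cleaner packaging of the paper's counting argument, and your explicit check that the restricted fault set is again an $f$-PEF makes precise a step the paper leaves implicit (it is proved separately as Claim~2 inside Theorem~\ref{main}).
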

\begin{proof}
In $BC_{n,k}$, the subgraphs are partitioned accor    ding to the dimension with the highest number of faulty edges. Between any two subgraphs, the number of edges is $n^k$. 
    
When $4 \leq n \leq 9$, $f(k)=\max\{n^k - 5,0\}$, where $f(k)$ denotes the total number of faulty edges across $BC[m],m\in \left< n - 1 \right>$. In the fault-free scenario, the number of edges between subgraphs is $n^k$. There are at least four fault-free edges between any two subgraphs, and thus $T(\Omega,F)$ is a complete graph. Therefore, there exists a Hamiltonian path from $i$ to $j$ in $T(\Omega,F)$.

When $n \geq 10$, $f(k)=\left\lceil \frac{n^k - 1}{2} \right\rceil (n-2) - \frac{3n}{2}$. By the Lemma \ref{Kn-H}, $T(\Omega, F)$ is $(|\Omega| - 4)$-Hamiltonian-connected. If there exist exactly \( |\Omega| - 3 \) paired of subgraphs such that the number of faulty edges between any two subgraphs is $n^k-2$, then \( T(\Omega,F) \) will contain \( |\Omega| - 3 \) faulty edges, in which case \( T(\Omega,F) \) is not Hamiltonian-connected.  
$(|\Omega|-3)\cdot(n^k-2)-\lceil\frac{n^k-1}{2}\rceil\cdot(n-2)+\frac{3n}{2} \geq (n-2-3)\cdot(n^k-2)-\frac{n^k}{2}\cdot(n-2)+\frac{3n}{2} = (\frac{n-8}{2})\cdot n^k-\frac{n}{2}+10$. When $k \geq 1$ and $n \geq 10$, $(\frac{n-8}{2})\cdot n^k-\frac{n}{2}+10$ is increasing. When $k =1,n=10$, $(\frac{n-8}{2})\cdot n^k-\frac{n}{2}+10 = 15> 1$. Then, $(|\Omega|-3)\cdot(n^k-2)-\lceil\frac{n^k-1}{2}\rceil\cdot(n-2)+\frac{3n}{2} > 0$. Thus, $T(\Omega,F)$ remains Hamiltonian-connected. Thus, there must exist a Hamiltonian path from $c_i$ to $c_j$. 

According to the Corollary \ref{H-path} and Lemma \ref{sub-connected-l}, there exists an ($s$,$t$)-Hamiltonian path in $BC[\Omega] - F$.
\end{proof}


Next we give our main result about fault-tolerance for the paired 2-disjoint path covers of
BCube under the PEF model:

\begin{theorem}\label{main}
For $n\geq 4$ and $k\geq0$, if the fault edge set $F$ in \( BC_{n,k} \) satisfies the f-PEF condition, then \( BC_{n,k} - F \) is paired 2-disjoint path coverable.
\end{theorem}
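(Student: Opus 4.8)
The plan is to proceed by induction on $k$. For the base case $k=0$, the graph $BC_{n,0}$ is isomorphic to the complete graph $K_n$, and the $f$-PEF condition reduces to $|F|\leq n-4$ (or $n-5$ when $n=7$), so Claim~\ref{Kn-(n-4)} settles the base case directly. For the inductive step with $k\geq 1$, I would partition $BC_{n,k}$ along the dimension carrying the most faulty edges into the $n$ subgraphs $BC[0],\dots,BC[n-1]$, each isomorphic to $BC_{n,k-1}$. The key observation is that the restriction of $F$ to any single subgraph $BC[m]$ satisfies the $f$-PEF condition for $BC_{n,k-1}$, since the per-dimension bounds $f(i)$ are monotone and the faulty edges of dimension $k$ lie entirely \emph{between} subgraphs; hence the induction hypothesis applies inside each $BC[m]$, and Corollary~\ref{H-path} gives Hamiltonian-connectivity of each $BC[m]-F$.

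The heart of the argument is a case analysis on how the four terminals $s_1,s_2,t_1,t_2$ are distributed among the subgraphs, organized by the values $l_{s_1},l_{s_2},l_{t_1},l_{t_2}$. The main structural tool is the following assembly strategy: build a paired $2$-DPC inside one carefully chosen subgraph for a pair of terminals lying there (using the induction hypothesis), then use Lemma~\ref{disjoint} to find an edge $(x,x^*)$ on one of these internal paths that can be "opened" and rerouted through the remaining subgraphs, and finally stitch the leftover subgraphs together into a single Hamiltonian path via Corollary~\ref{sub-connected} applied to the appropriate index set $\Omega$ (one of $\Omega_1,\dots,\Omega_4$, chosen to exclude the subgraphs already consumed). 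Splicing this external Hamiltonian path into the opened edge $(x,x^*)$ extends the path through all remaining nodes while preserving the disjointness and the correct source–destination pairing. The distribution cases to cover are roughly: (i) all four terminals in distinct subgraphs; (ii) exactly one $s_i,t_i$ pair sharing a subgraph; (iii) both source nodes (or both $s$–$t$ cross pairs) in one subgraph; and the degenerate small-index configurations where $|\Omega|$ might drop below $n-2$, which must be handled separately.

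In each case I would first construct partial covers in one or two subgraphs, then verify two things: that Lemma~\ref{disjoint} guarantees a reroutable edge with two distinct free exit dimensions into the target index set $\Omega$ (so that both endpoints $x$ and $x^*$ can leave to \emph{different} neighboring subgraphs), and that the chosen $\Omega$ has size at least $n-2$ so Corollary~\ref{sub-connected} applies to produce the spanning Hamiltonian path on $BC[\Omega]-F$. The bookkeeping must ensure every fault-free node is covered exactly once and that the two resulting paths remain internally disjoint after splicing. A subtlety is that when two terminals already occupy a subgraph as a matched pair, the internal cover there may be a Hamiltonian path rather than a full $2$-DPC, so the entry/exit edges into the external structure must be selected to avoid the terminals themselves.

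The main obstacle I anticipate is the careful management of the boundary cases where the index set $\Omega$ is forced to be small (precisely $n-2$) \emph{and} the reroutable edge from Lemma~\ref{disjoint} must simultaneously avoid the terminal-bearing subgraphs: here the two guarantees (existence of a free edge with two distinct exits, and Hamiltonian-connectivity of the contracted graph $T(\Omega,F)$) are both operating near their tight thresholds, so one must check that the counting inequalities proved in Lemma~\ref{disjoint} and Corollary~\ref{sub-connected} still leave enough slack when the faulty dimension-$k$ edges concentrate adversarially between exactly the subgraphs one wishes to use. Verifying that the specific small values $n\in\{4,5,6,8,9\}$ and $n=7$ (with its reduced bound) do not break these threshold inequalities will require the most delicate checking, mirroring the exceptional-case treatment already visible in Claim~\ref{Kn-(n-4)} and Lemma~\ref{hcbc}.
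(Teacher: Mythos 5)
Your overall architecture matches the paper's: induction on $k$, partitioning along the dimension carrying the most faults, showing the restricted fault set remains an $f$-PEF, and a case analysis that opens an edge of an internal $2$-DPC (Lemma~\ref{disjoint}) and splices in a Hamiltonian path of $BC[\Omega]-F$ (Corollary~\ref{sub-connected}). However, there is one genuine gap. Your assembly strategy presupposes that some subgraph contains \emph{two} of the terminals, since it starts by building a paired $2$-DPC inside ``one carefully chosen subgraph for a pair of terminals lying there.'' In the case you label (i), where $s_1,s_2,t_1,t_2$ lie in four distinct subgraphs, no such subgraph exists, and none of your stated tools applies: Corollary~\ref{sub-connected} produces a \emph{single} Hamiltonian path on an index set of size at least $n-2$, whereas here you must split all $n$ subgraphs into two spanning pieces with prescribed endpoints in four different blocks. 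The paper needs a separate idea for this case: it forms a contracted graph $C$ on the $n$ subgraphs (two nodes adjacent when the corresponding subgraphs are joined by at least two fault-free edges), shows via Claim~\ref{Kn-(n-4)} and a counting argument that $C$ itself admits a paired $2$-DPC connecting $\{c_{s_1},c_{s_2}\}$ to $\{c_{t_1},c_{t_2}\}$, and then realizes each abstract path by chaining Hamiltonian paths of the constituent subgraphs (Corollary~\ref{H-path}) through fault-free crossing edges. You would need to add this, or an equivalent mechanism, to close that case.

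Two smaller points. First, your justification that the restriction of $F$ to $BC[m]$ is still an $f$-PEF (``the bounds are monotone and the dimension-$k$ faults lie between subgraphs'') is not yet a proof: the PEF condition is stated in terms of the \emph{sorted} per-dimension fault counts $r_j\le f(j)$, and restricting to a subgraph can permute the order of $|F_0'|,\dots,|F_{k-1}'|$ relative to $|F_0|,\dots,|F_{k-1}|$; the paper's Claim 2 is devoted precisely to handling this rearrangement. Second, the paper disposes of $n=4$, $k=1$ by exhaustive computer verification rather than by the general counting inequalities, so your plan to ``check the small values'' should anticipate that at least this one instance falls outside the analytic argument and must be treated as an additional base case.
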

\begin{proof}
We prove this theorem by induction on $k$. Since $BC_{n,0}$ is isomorphic to $K_n$, according to Claim \ref{Kn-(n-4)}, $BC_{n,0}$ is $(n-4)$-edge fault-tolerant paired 2-disjoint path coverable for $n\geq4$ and $n\neq7$. Moreover, $BC_{7,0}$ is $(n-5)$-edge fault-tolerant paired 2-disjoint path coverable. Thus Theorem~\ref{main} holds for $k = 0$.

For \( n = 4 \) and \( k =1 \), we conduct code-based verification using an exhauive brute-force method.\footnote{https://github.com/oliviacqq/bcube-2dpc} 
For the other cases where \( n = 4  ,  k \geq 2 \) or \( n \geq 5  ,  k \geq 1 \), the proof proceeds as follows.

First we show a claim which is useful for the induction.
\begin{claim}
    If the fault edge set $F$ in $BC_{n,k}$ is an $f$-PEF, then the fault edge set $F'$ inside $BC[m]$ ($BC[m]$ is isomorphic to $BC_{n,k-1}$) is also an $f$-PEF with respect to $n$ and $k-1$.
\end{claim}
\textit{Proof of Claim 2:}
  Let $F_i$ be the set of faulty edges in the $i$-th dimension of $BC_{n, k}$, and $F'_i$ be the set of faulty edges in the $i$-th dimension inside the subgraph $BC[m]$ for some $m\in \left< n - 1 \right>$.  
Assume that $|F_i|$ is the $a$-th smallest among $\{|F_0|,...,|F_{k-1}|\}$, and $|F_i'|$ is the $b$-th smallest among $\{|F_0'|,...,|F_{k-1}'|\}$. In order to prove $F'$ is an $f$-PEF with respect to $n$ and $k-1$, by definition of PEF set, we only need to show $|F'_i|\leq f(b)$.

If $b \geq a$, then $f(b) \geq f(a)$. Therefore, we have $|F_i'| \leq |F_i| \leq f(a) \leq f(b)$. 
Otherwise, there must exist an $F_l$ such that $|F_l|$ is the $p$-th smallest among $\{|F_0|,...,|F_{k-1}|\}$ for some $p \in [0,b]$, and $|F_l'|$ is the $q$-th smallest among $\{|F_0'|,...,|F_{k-1}'|\}$ for some $q \in [b+1,k-1]$. 
Thus, we have  $|F_i'| \leq |F_l'| \leq |F_l| \leq f(p) \leq f(b)$. It completes the proof of Claim 2. $\hfill\qed$

Assume that Theorem~\ref{main} holds for all $BC_{n,k'}$ where $k' < k$. We now prove the statement for $BC_{n,k}$. For each source set \( \{s_1, s_2\} \) and destination set \( \{t_1, t_2\} \), where these four nodes are all different, we want to find two disjoint fault-free paths that connect each pair of source $s_i$ and destination $t_i$ and cover all the nodes in $BC_{n,k}$. To prove this, we divide into the following cases:

\textbf{Case 1}: \( s_1, s_2, t_1, t_2 \) belong to the same subgraph \( BC[l_{s_1}] \).

By inductive hypothesis, we can construct a paired 2-DPC \( P_1 \) from $s_1$ to $t_1$ and \( P_2 \) from $s_2$ to $t_2$ in \( BC[l_{s_1}] - F\). According to Lemma \ref{disjoint}, in \( BC[l_{s_1}] - F\), there exists an edge \( (x, x^*) \in P_1\cup P_2\) such that \( \{ (x,n^{l_1}(x)),  (x^*, n^{l_2}(x^*))\} \notin F\) and $l_1\neq l_2$. By Corollary \ref{sub-connected}, in $BC[\Omega_1] - F$, where $\Omega_1=\langle n-1 \rangle \setminus \{l_{s_1}\}$, there exists a Hamiltonian path \( H_1 \) from $n^{l_1}(x)$ to $n^{l_2}(x^*)$. Therefore, we can construct a paired 2-DPC in \( BC_{n,k} -F\) formed by the paths \( P_1 \) and \( P_2 \cup H_1 \cup \{(x, n^{l_1}(x)), (x^*, n^{l_2}(x^*))\} \setminus \{(x, x^*)\} \), as shown in Figure \ref{fig:Case1 and Case2}(a).

\begin{figure}[H]
    \centering
    \includegraphics[width=0.55\linewidth]{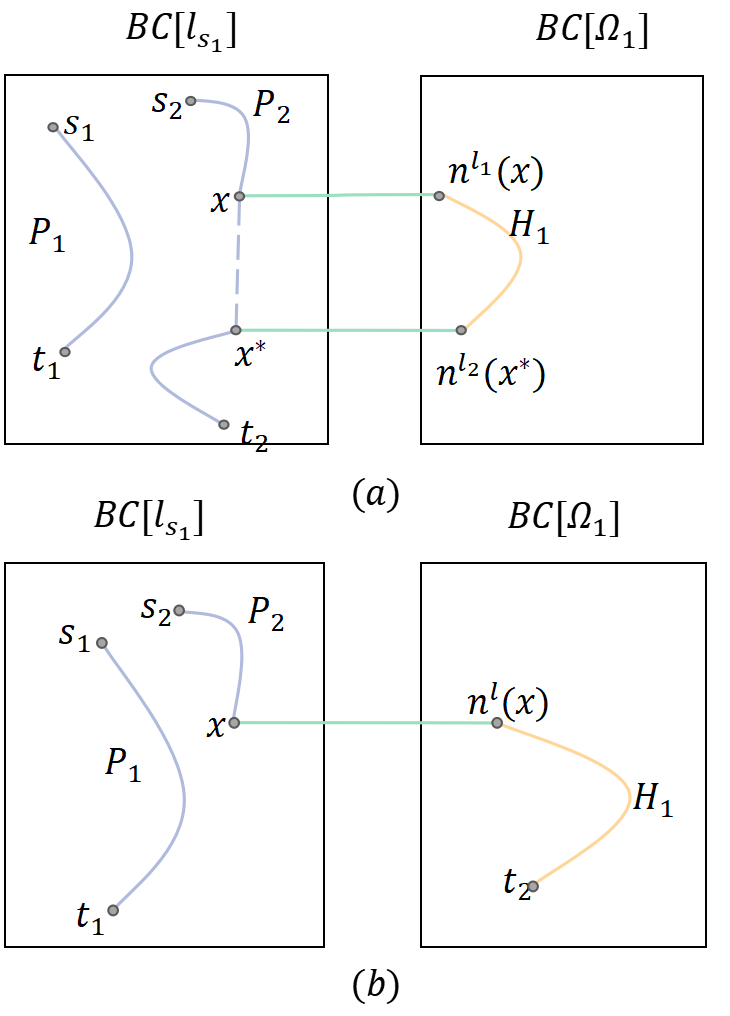} 
    \caption{Case 1 and Case 2 of Theorem \ref{main} ($\Omega_1 = \left<n-1\right>\setminus{l_{s_1}}$) }
\label{fig:Case1 and Case2}
    \end{figure} 

\textbf{Case 2}: Exactly three nodes from \( s_1, s_2, t_1, t_2 \) belong to the same subgraph. Without loss of generality, we assume that \( s_1, s_2, t_1 \) belong to the same subgraph \( BC[l_{s_1}] \).

Since \( BC[l_{s_1}] \) is a graph with \( n^k \) nodes, $BC[l_{s_1}] - \{s_1,s_2,t_1\}$ contains \( n^k - 3 \) nodes. These \( n^k - 3 \) nodes can establish connections to \( n - 2 \) distinct subgraphs except \( BC[l_{s_1}] \) and \( BC[l_{t_2}] \), yielding \( (n^k - 3)(n - 2) \) edges. Note that $F_k$ is the number of faulty edges across subgraphs $BC[m],m\in{\left<n-1\right>}$. If \((n^k - 3)(n - 2) - |F_k| > 0\), then there must exist a node \(  x \in V(BC[l_{s_1}])\setminus\{s_1, s_2, t_1\}\) such that \((x,n^{l}(x))\notin F\) for some \(l \in \left<n-1\right> \setminus\{l_{s_1}, l_{t_2}\} \).  


• For $n=4,k\geq2$, $(n^k - 3)(n - 2) - |F_k|=(4^k-3)*(4-2)-(4^k-5)=4^k-1\geq1$.

• For \( 5 \leq n \leq 9 \), \((n^k - 3)(n - 2) - |F_k| \geq
(n^k - 3)(n - 2) - (n^k - 5) = (n^k - 3)(n-3)+2 \geq 1\). 

• For \( n \geq 10 \), \((n^k - 3)(n - 2) - |F_k| \geq (n^k - 3)(n - 2) - \frac{n^k}{2}  (n - 2) + \frac{3n}{2} = n^k\cdot\frac{n-2}{2}-\frac{3n}{2}+6\). When $n \geq 10$, the function $g(n,k) = n^k\cdot\frac{n-2}{2}-\frac{3n}{2}+6$ is increasing. For \( n = 10 \) and \( k = 1 \), we get \(n^k\cdot\frac{n-2}{2}-\frac{3n}{2}+6 = 31 > 1\).

Therefore, we can find \( x \in V(BC[l_{s_1}])\setminus\{s_1, s_2, t_1\}\) such that \((x,n^{l}(x))\notin F\) for some \(l \in \left<n-1\right> \setminus\{l_{s_1}, l_{t_2}\} \). In \( BC[l_{s_1}] -F\), by inductive hypothesis, we can construct a paired 2-DPC \( P_1 \) from $s_1$ to $t_1$ and \( P_2 \) from $s_2$ to $x$. According to Corollary \ref{sub-connected}, there exists a Hamiltonian path \( H_1 \) connecting \( n^{l}(x) \) and \( t_2 \) in $BC[\Omega_1]-F$. Thus, we can construct a paired 2-DPC in \( BC_{n,k} -F\), formed by  \( P_1 \) and \( P_2 \cup H_1 \cup \{(x, n^{l}(x))\} \), as shown in Figure \ref{fig:Case1 and Case2}(b).

\textbf{Case 3}: Two nodes from \( s_1, s_2, t_1, t_2 \) belong to the same subgraph.

Case 3.1: \( \{s_1, s_2\}\) or \( \{t_1,t_2\} \) belong to the same graph. By symmetry, it suffices to prove the case where both $s_1$ and $s_2$ belong to the subgraph $BC[l_{s_1}]$.

\begin{figure}[H]
    \centering
    \includegraphics[width=0.65\linewidth]{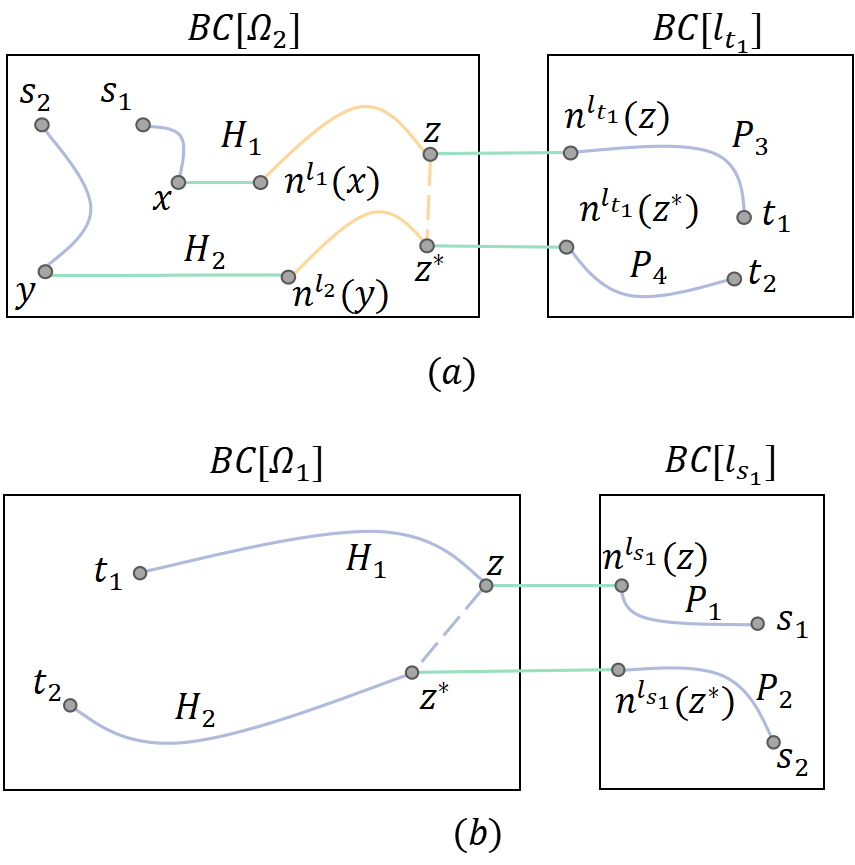}
    \caption{Case 3.1 of Theorem \ref{main} ($\Omega_1 = \left<n-1\right>\setminus{\{l_{s_1}\}},\Omega_2 = \left<n-1\right>\setminus{\{l_{t_1}\}}$)}
    \label{fig:Case3.1}
\end{figure} 
(1) $t_1$ and $t_2$ also belong to the same subgraph $BC[l_{t_1}]$ (i.e. $l_{t_1} = l_{t_2}$).

In \( BC[l_{s_1}] \), apart from $s_1$ and $s_2$, there are $n^k-2$ nodes that connect to $n-2$ subgraphs excluding both $BC[l_{s_1}]$ and $BC[l_{t_1}]$.

• For \( n = 4,k \geq 2 \), $(n^k-2)(n-2)-|F_k|\geq(n^k-2)(n-2)-(n^k-5) = (n^k-2)\cdot(n-3)+3 \geq 17 $.

• For \( 5 \leq n \leq 9 \), $(n^k-2)(n-2)-|F_k|\geq(n^k-2)(n-2)-(n^k-5) = (n^k-2)\cdot(n-3)+3 \geq 9 $.

• For \( n \geq 10 \), $(n^k-2)(n-2)-|F_k|\geq(n^k-2)(n-2)-\frac{n^k}{2}(n-2)+\frac{3n}{2} = n^k(\frac{n}{2}-1)-\frac{n}{2}+4 \geq 39 $. 

Thus, in \( BC[l_{s_1}] \), there exists $x\in V(BC[l_{s_1}])\setminus\{s_1,s_2\}$ such that $(x,n^{l_1}(x)) \notin{F}$ for some $l_1\in{\left<n-1\right>}\setminus\{l_{s_1},l_{t_1}\}$. 

Similarly, we compute 

• For \( n = 4,k \geq 2 \), $(n^k-3)(n-3)-|F_k|\geq(n^k-3)(n-3)-(n^k-5) = (n^k-3)\cdot(n-4)+2 \geq 2 $.

• For \( 5 \leq n \leq 9 \), $(n^k-3)(n-3)-|F_k|\geq(n^k-3)(n-3)-(n^k-5) = (n^k-3)\cdot(n-4)+2 \geq 4 $.

• For \( n \geq 10 \), $(n^k-3)(n-3)-|F_k|\geq(n^k-3)(n-3)-\frac{n^k}{2}(n-2)+\frac{3n}{2} = n^k(\frac{n}{2}-2)-\frac{3n}{2}+9 \geq 24 $. 

Thus, there exists $y\in V(BC[l_{s_1}])\setminus\{s_1,s_2,x\}$ such that $(y,n^{l_2}(y)) \notin{F}  $ for some $l_2\in{\left<n-1\right>}\setminus\{l_{s_1},l_{t_1},l_1\}$.

By inductive hypothesis, we can construct a paired 2-DPC $P_1$ from $s_1$ to $x$ and $P_2$ from $s_2$ to $y$ in $BC[l_{s_1}]-F$. Recall that $\Omega_4 = \left<n-1\right>\setminus{\{l_{s_1},l_{t_1}\}}$ and $\Omega_2 = \left<n-1\right>\setminus{\{l_{t_1}\}}$. According to Corollary \ref{sub-connected}, in $BC[\Omega_4]-F$, there exists a Hamiltonian path $H$ from \( n^{l_1}(x) \) to \( n^{l_2}(y) \). In \( BC[\Omega_2] \), there exists a Hamiltonian path $H'=P_1\cup P_2\cup H \cup \{(x,n^{l_1}(x)),(y,n^{l_2}(y))\}$. In $H'$, $t_1$ is connected to the other $n-1$ subgraphs by $n-1$ edges, and similarly, $t_2$ is connected to the other $n-1$ subgraphs by $n-1$ edges.

• For \( n = 4,k \geq 2 \), $\lceil\frac{(n-1)n^k-1}{2}\rceil-2(n-1)-|F_k| \geq \frac{(n-1)n^k-1}{2}-2(n-1)-(n^k-5)=\frac{n-3}{2}n^k-2n+\frac{13}{2}\geq \frac{13}{2}$.

• For \( 5 \leq n \leq 9 \), $\lceil\frac{(n-1)n^k-1}{2}\rceil-2(n-1)-|F_k| \geq \frac{(n-1)n^k-1}{2}-2(n-1)-(n^k-5)=\frac{n-3}{2}n^k-2n+\frac{13}{2}\geq \frac{3}{2}$.

• For \( n \geq 10 \), $\lceil\frac{(n-1)n^k-1}{2}\rceil-2(n-1)-|F_k| \geq \frac{(n-1)n^k-1}{2}-2(n-1)-\frac{n^k}{2}(n-2)+\frac{3n}{2}=\frac{1}{2}n^k-\frac{n}{2}+\frac{3}{2}\geq10\cdot\frac{1}{2}-10\cdot\frac{1}{2}+\frac{3}{2}>1$.

Thus, there exists an edge \( (z, z^*)\in H' \) such that \( (z, n^{l_{t_1}}(z)) , (z^*, n^{l_{t_1}}(z^*)) \notin F\) and \(n^{l_{t_1}}(z),n^{l_{t_1}}(z^*) \notin\{t_1,t_2\} \).  By deleting the edge $(z, z^*)$ from $H'$, we partition it into two disjoint subpaths: $H_1$ from $s_1$ to $z$ and $H_2$ from $s_2$ to $z^*$. In \( BC[l_{t_1}]-F \), we can construct a paired 2-DPC $P_3$ from $n^{l_{t_1}}(z)$ to $t_1$ and $P_4$ from $n^{l_{t_1}}(z^*)$ to $t_2$. Therefore, in \( BC_{n,k}-F \), we can construct a paired 2-DPC formed by \( P_3 \cup H_1 \cup \{(z,n^{l_{t_1}}(z))\}\) and \(P_4\cup H_2\cup \{(z^*,n^{l_{t_1}}(z^*))\}  \), as shown in Figure \ref{fig:Case3.1}(a).

(2) $t_1$ and $t_2$ belong to different subgraphs (i.e. \( l_{t_1} \neq l_{t_2} \)).

According to Corollary \ref{sub-connected}, in $BC[\Omega_1]$, there exists a Hamiltonian path $H$ from \( t_1 \) to \( t_2 \). 

In \( H \), $s_1$ is connected to the other $n-1$ subgraphs by $n-1$ edges, and similarly, $s_2$ is connected to the other $n-1$ subgraphs by $n-1$ edges.

• For \( n = 4,k \geq 2 \), $\lceil\frac{(n-1)n^k-1}{2}\rceil-2(n-1)-|F_k| \geq \frac{(n-1)n^k-1}{2}-2(n-1)-(n^k-5)=\frac{n-3}{2}n^k-2n+\frac{13}{2}\geq \frac{13}{2}$.

• For \( 5 \leq n \leq 9 \), $\lceil\frac{(n-1)n^k-1}{2}\rceil-2(n-1)-|F_k| \geq \frac{(n-1)n^k-1}{2}-2(n-1)-(n^k-5)=\frac{n-3}{2}n^k-2n+\frac{13}{2}\geq \frac{3}{2}$.

• For \( n \geq 10 \), $\lceil\frac{(n-1)n^k-1}{2}\rceil-2(n-1)-|F_k| \geq \frac{(n-1)n^k-1}{2}-2(n-1)-\frac{n^k}{2}(n-2)+\frac{3n}{2}=\frac{1}{2}n^k-\frac{n}{2}+\frac{3}{2}\geq10\cdot\frac{1}{2}-10\cdot\frac{1}{2}+\frac{3}{2}>1$.

Thus, there exists \( (z, z^*)\in H \) such that \((z, n^{l_{s_1}}(z))(z^*, n^{l_{s_1}}(z^*)) \notin F \) and \( n^{l_{s_1}}(z),n^{l_{s_1}}(z^*) \notin\{s_1,s_2\} \).  By deleting the edge $(z, z^*)$ from $H$, we partition it into two disjoint subpaths: $H_1$ from $t_1$ to $z$ and $H_2$ from $t_2$ to $z^*$. By inductive hypothesis, in \( BC[l_{s_1}]-F \), we can construct a paired 2-DPC $P_1$ from $s_1$ to $n^{l_{s_1}}(z)$ and $P_2$ form $s_2$ to $n^{l_{s_1}}(z^*)$. Therefore, in \( BC_{n,k}-F \), we can construct a paired 2-DPC formed by \( P_1 \cup H_1\cup \{(z,n^{l_{s_1}}(z))\}\) and \(P_2 \cup H_2 \cup \{(z^*,n^{l_{s_1}}(z^*))\} \), as shown in the Figure \ref{fig:Case3.1}(b).

Case 3.2: \( s_i, t_j \) belong to the same subgraph.

(1) If \( i \neq j \), assume without loss of generality that  \( s_1, t_2 \) belong to the same subgraph. By the symmetry of $s_2$ and $t_2$, it can be reduced to the Case 3.1, where \( s_1, s_2 \) belong to the same subgraph.


(2) If \( i = j \), assume without loss of generality that \( s_1, t_1 \) belong to the same subgraph \( BC[l_{s_1}] \). According to Corollary \ref{H-path}, in $BC[l_{s_1}]-F$, there exists a Hamiltonian path $H_1$ from \( s_1 \) to \( t_1 \). If $l_{s_2} \neq l_{t_2}$, according to Corollary \ref{sub-connected}, in $BC[\Omega_1]-F$, there exists a Hamiltonian path $H_2$ from \( s_2 \) to \( t_2 \). Therefore, $H_1$ and $H_2$ are a paired 2-DPC in \( BC_{n,k} - F\), as shown in the Figure \ref{fig:Case3.2}(a).

If $l_{s_2} = l_{t_2}$, according to Corollary \ref{H-path}, in $BC[l_{s_2}]-F$, there exists a Hamiltonian path $H_2$ from \( s_2 \) to \( t_2 \).

• For $n\geq4,k\geq2$ or $n\geq5,k\geq1$, we have $\lceil\frac{n^k-1}{2}\rceil(n-2)-|F_k| \geq \frac{n^k-1}{2}(n-2)-n^k+5=n^k\cdot\frac{n-4}{2}-\frac{n}{2}+6>=4$.

• For $n\geq10$, we have $\lceil\frac{n^k-1}{2}\rceil(n-2)-|F_k| \geq \frac{n^k-1}{2}(n-2)-\frac{n^k}{2}(n-2)+\frac{3n}{2}+1=n+2>=12$.

Thus, there exists \( (z, z^*)\in H_2 \) such that \( (z, n^{l_1}(z)) , (z^*, n^{l_2}(z^*)) \notin F\) for some distinct \(l_1,l_2 \in \left<n-1\right>\setminus\{l_{s_1},l_{s_2}\}\). According to Corollary \ref{sub-connected}, in $BC[\Omega_3]$, there exists a Hamiltonian path $H_3$ from \( n^{l_1}(z) \) to \( n^{l_2}(z^*) \) where $\Omega_3= \left<n-1\right>\setminus{\{l_{s_1},l_{s_2}\}}$. Therefore, we can construct a paired 2-DPC in \( BC_{n,k}-F \), formed by \( H_1 \) and \( H_2\cup H_3\cup \{(z, n^{l_1}(z)),(z^*, n^{l_2}(z^*))\} \setminus \{(z,z^*)\}\), as shown in the Figure \ref{fig:Case3.2}(b).
\begin{figure}[H]
    \centering
    \includegraphics[width=0.3\textwidth]{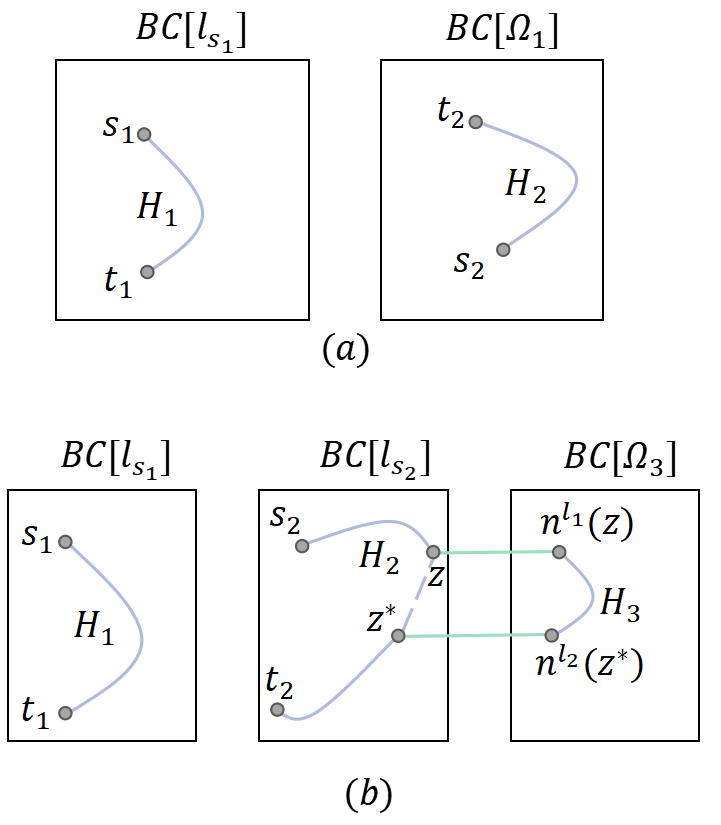}  
    \caption{
    Case 3 of Theorem \ref{main} with \( l_{t_1} \neq l_{t_2} \) and $n \geq 10$ ($\Omega_4 = \left<n-1\right>\setminus{\{l_{s_1},l_{t_1}\}},\Omega_2 = \left<n-1\right>\setminus{\{l_{t_1}\}}$)}
    \label{fig:Case3.2}
\end{figure}

\textbf{Case 4}: \( s_1, s_2, t_1, t_2 \) belong to four distinct subgraphs.

\begin{figure}[H]
    \centering
    \includegraphics[width=3in]{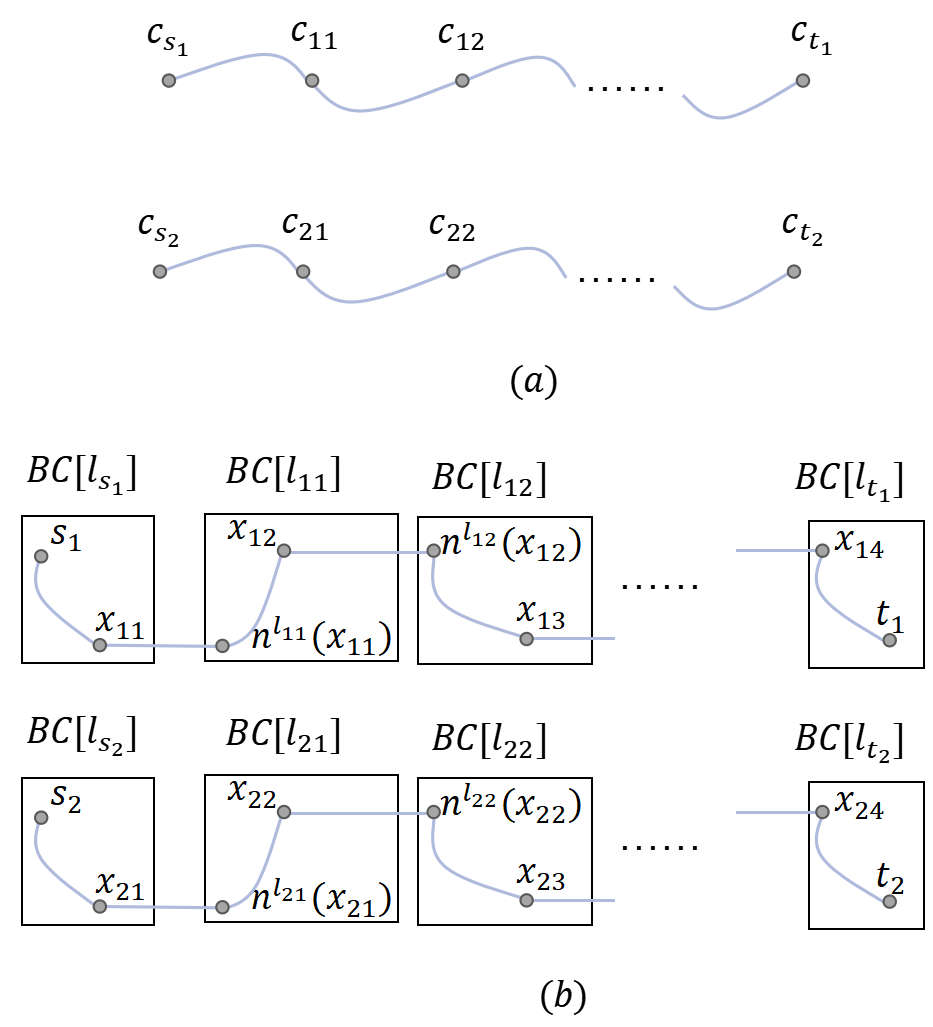}
    \caption{\( s_1, s_2, t_1, t_2 \) belong to different subgraphs}
    \label{fig:Case4}
\end{figure} 

We define the contracted graph $C$ of $BC_{n,k}$.  $A$  node $c_i$ in $C$ corresponds to the subgraph $BC[i]$, and $c_i$ and $c_j$ are adjacent if and only if their corresponding subgraphs are connected by at least two fault-free edges in $F$. Let $c_{s_1}, c_{s_2}, c_{t_1}$ and $c_{t_2} $ be the nodes in $C$ corresponding to the subgraphs $BC[l_{s_1}],BC[l_{s_2}],BC[l_{t_1}],BC[l_{t_2}]$ respectively, as shown in the Figure \ref{fig:Case4}(a).

When $n\geq4,k\geq2$ or $n\geq5,k\geq1$, $|F_k|\leq n^k - 5$, where $|F_k|$ denotes the total number of faulty edges across subgraphs. There are at least four fault-free edges between any two subgraphs, and thus $C$ is a complete graph. According to Claim \ref{Kn-(n-4)}, $C$ is paired 2-disjoint path coverable. Therefore, there exists a paired 2-DPC connecting $\{c_{s_1}, c_{s_2}\}$ and $\{c_{t_1},c_{t_2}\}$ in $C$.

When $n \geq 10$, $|F_k|\leq\left\lceil \frac{n^k - 1}{2} \right\rceil (n-2) - \frac{3n}{2}$. By Claim \ref{Kn-(n-4)}, if \( C \) is not a paired 2-DPC, then in \( BC_{n,k} \), there are at least \( n-3 \) pairs of subgraphs such that the number of faulty edges between them is greater than or equal to \( n^k - 1 \). Since $(n-3)(n^k-1)-F_k\geq(n-3)(n^k-1)-\frac{n^k}{2}(n-2)+\frac{3n}{2}=n^k(\frac{n-4}{2})+\frac{n}{2}+3>0$, the number of faulty edges in $C $ is less than $n-3$. Thus, $C$ is paired 2-disjoint path coverable. Therefore, we can construct a paired 2-DPC $P_1$ from $c_{s_1}$ to $c_{t_1}$ and $P_2$ from $c_{s_2}$ to $c_{t_2}$ in $C-F$, as shown in the Figure \ref{fig:Case4}(a).

In $BC[l_{s_1}]$, $|F\cap E(l_{s_1},l_{11})|<n^k-1$. Thus there exits a node $x_{11} \in V(BC[l_{s_1}]) \setminus s_1$ such that $(x_{11},n^{l_{11}}(x_{11}))\notin F$. According to Corollary \ref{H-path}, there exits Hamiltonian path $H$ from $s_1$ to $x_{11}$ in $BC[l_{s_1}]$. In $BC[l_{11}]$, $|F\cap E(l_{s_1},l_{11}))|<n^k-1$, Thus there exits $x_{12} \in V(BC[l_{11}]) \setminus n^{l_{11}}(x_{11})$ such that $(x_{12},n^{l_{12}}(x_{12}))\notin F$.  According to Corollary \ref{H-path}, there exits a Hamiltonian path $H_2$ from $n^{l_{11}}(x_{11})$ to $x_{12}$ in $BC[l_{11}]$.

Following this construction recursively, we can establish a path $H(s_1,t_1)$ from $s_1$ to $t_1$ that covers all nodes in the subgraphs belonging to $P_1$. Similarly, we can construct a path $H(s_2,t_2)$ from $s_2$ to $t_2$ that covers all nodes in the subgraphs belonging to $P_2$, as shown in the Figure \ref{fig:Case4}(b).
\end{proof}

\section{Summary}
Following the proposal in \cite{0Link},
this paper focuses on constructing the fault-tolerant paired 2-disjoint path covers (2-DPC) in BCube under the Partitioned Edge Fault (PEF) model, which is a novel fault model by  constraining the number of faulty edges in each dimension. 
We prove that BCube always contains a paired 2-DPC between any two distinct source–destination pairs, with the number of faulty edges at most 
$\sum_{i=1}^k(\max\{0, n^i-5\})+n-5$ for $n=7$, 
$\sum_{i=1}^k(\max\{0, n^i-5\})+n-4$ for $n\in \{4,5,6,8,9\}$, 
and $\sum_{i=1}^k(\left\lceil \frac{n^i - 1}{2} \right\rceil \cdot(n-2) - \frac{3}{2}n)+n-4$ for $n\geq10$.
This result extends the applicability of the PEF model, and offers theoretical foundations for enhancing the fault tolerance of BCube under large-scale edge failure scenarios.

This paper investigates the fault tolerance of Bcube under link failures. Future work will explore fault tolerance under hybrid failure scenarios, including switch, server, and link failures.

\ack{This work was supported by National Key Research and Development Program of China [2022YFA1006400 to Q. Cai].}

\nocite{*}

\bibliographystyle{unsrt}
\bibliography{DPC_BCUBE}

\end{document}